\definecolor{gold}{rgb}{1.0, 0.87, 0.4}
\newcommand{\C}{\mathcal{C}}
\newcommand{\Mor}{\mathrm{Mor}}
\newcommand{\AF}{\mathsf{AF}}
\newcommand{\AC}{\mathsf{AC}}
\newcommand{\W}{\mathsf{W}}
\newcommand{\F}{\mathsf{F}}
\newcommand{\Cof}{\mathsf{C}}
\newcommand{\Cate}{\mathcal{C}}
\newtheorem{theorem}{Theorem}[section]
\newtheorem{lemma}[theorem]{Lemma}
\newtheorem{proposition}[theorem]{Proposition}
\newtheorem*{theorem*}{Theorem}
\theoremstyle{definition}
\newtheorem{definition}[theorem]{Definition}
\newtheorem{example}[theorem]{Example}
\newtheorem{remark}[theorem]{Remark}
\newtheorem{convention}[theorem]{Convention}
\newtheorem{construction}[theorem]{Construction}
\definecolor{dark-red}{rgb}{0.5,0.15,0.15}
\definecolor{dark-blue}{rgb}{0.15,0.15,0.6}
\definecolor{dark-green}{rgb}{0.15,0.6,0.15}
\definecolor{gRed}{HTML}{ff5100}
\definecolor{gGreen}{HTML}{2b83ba}
\DeclareMathOperator{\id}{id} 
\title{Left and Right Bousfield Localization on Lattices}
\author{Andrés Carnero Bravo}
\address{Centro de Ciencias Matemáticas, UNAM, Ap. Postal 61-3 Xangari, 58089 Morelia, Michoacán, México}
\email{carnero@matmor.unam.mx}
\author{Shuchita Goyal}
\address{Birla Institute of Technology And Science, Pilani, India}
\email{shuchita.goyal@pilani.bits-pilani.ac.in}
\author{Sofía Martínez Alberga}
\address{Stonehill College,Department of Mathematics, North Easton, MA, USA}
\email{smartinezalbe@stonehill.edu}
\author{\\Cherry Ng}
\address{Northwestern University, Department of Mathematics, Evanston, IL, USA}
\email{cherry.ng@northwestern.edu}
\author{Constanze Roitzheim}
\address{University of Kent, SMSAS, Canterbury CT2 7FS, UK}
\email{c.roitzheim@kent.ac.uk}
\author{Daniel Tolosa}
\address{Arizona State University, Tempe, AZ, USA}
\email{dtolosav@asu.edu}
\date{\today}
\begin{document}

\begin{abstract}
The key information of a model category structure on a poset is encoded in a transfer system, which is a combinatorial gadget, originally introduced to investigate homotopy coherence structures in equivariant homotopy theory. 
We describe how a transfer system associated with in a model structure on a lattice is affected by left and right Bousfield localization and provide a minimal generating system of morphisms which are responsible for the change in model structure. 
This leads to new concrete insights into the behavior of model categories on posets in general. 
\end{abstract}

\maketitle

\section*{Introduction}

Transfer systems originally emerged in equivariant homotopy theory with the goal of understanding equivariant analogs of higher coherences.
Nonequivariantly, May \cite{May72} showed that homotopy-coherent commutative and associative multiplicative structures were in direct and unique correspondence with iterated loop spaces. 
These are now commonly known as $E_{\infty}$-structures. 
Given a finite group, Blumberg and Hill \cite{BH15} studied homotopy-coherent commutative multiplicative structures in equivariant settings and found that, unlike in the nonequivariant setting, these so-called ``$N_\infty$-structures'' were no longer unique up to a reasonable notion of equivalence. 
In fact, the homotopy type of $N_\infty$-operads is completely described by \emph{transfer systems}, which are combinatorial objects, see e.g. \cite{Rub21} and \cite{BBR}. 

In a more category-theoretic language, a transfer system on a poset $\C$ is a wide subcategory of $\C$ closed under pullbacks. 
When $\C=Sub(G)$, the subgroup lattice of a finite group $G$, one recovers precisely the notion from \cite{BBR}. 
Subsequent work has not only linked transfer systems to well-established combinatorial methods, but has also showed that transfer systems can be identified as the acyclic fibrations of model category structures \cite{FOOQW} \cite{BOOR23}. 
Since the data of a model structure can be entirely determined by its classes of {weak equivalences} $\W$ and {acyclic fibrations} $\AF$, the model category information on a lattice is given by $\W$ and a transfer system.

Creating one model structure from an existing one can be a rather difficult process, and \emph{Bousfield localization} provides one way of doing just this. 
It is a powerful tool in homotopy theory used to enlarge the weak equivalence class of a model category.
The aim of localization is to formally invert morphisms in its associated homotopy category.
In this paper, we show how the transfer system given by a model structure on a lattice changes under left and right Bousfield localization. 
Furthermore, we describe an explicit set of morphisms, which we call \emph{Golden Arrows} $\Gamma_f$, that generates the new acyclic fibration set by adding $\Gamma_f$ to the old $\AF$ as a transfer system. 
In other words, the new acyclic fibrations are given by $\left< \AF \cup \Gamma_f \right>$, where $\left< M \right>$ denotes the smallest transfer system containing the set $M$. 
More precisely, we prove the following statement, which appears as Theorem \ref{thm:golden}. 

\begin{theorem*}
    For any model structure $(\W,\AF)$ on a lattice and an indecomposable morphism $f$, right Bousfield localization at $f$ coincides with the model structure produced in the following way. 
    \begin{itemize}
        \item The new weak equivalences are given by iteratively closing $\W \cup \left<\{f\}\right>$ under a two-out-of-three property and transfer system operations.
        \item The new acyclic fibrations are given by the set $\left< \AF \cup \Gamma_f \right>$. 
    \end{itemize}
\end{theorem*}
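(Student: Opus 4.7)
The plan is to define the proposed model structure $(\W', \AF')$ combinatorially on the lattice and then identify it with the right Bousfield localization $R_f(\W, \AF)$ by a universal-property argument. The guiding principle is that right Bousfield localization preserves the fibration class while enlarging the weak equivalences to include $f$, so the new acyclic fibrations arise as $\mathrm{Fib} \cap \W'$. Because both fibrations and acyclic fibrations in a poset model structure are encoded as transfer-system data by \cite{FOOQW, BOOR23}, this intersection must itself be a transfer system, and the content of the theorem is that it is generated by adding an explicit minimal set $\Gamma_f$ to $\AF$.

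For the description of $\W'$, I would proceed by iterated closure. Starting from $\W_0 = \W \cup \langle\{f\}\rangle$, alternately apply the two-out-of-three completion (forced in every model structure) and the transfer-system closure forced by the compatibility of weak equivalences with the underlying poset structure. Because the lattice is small, this sequence stabilizes after finitely many stages, yielding $\W'$. One then verifies that $\W'$ is genuinely the smallest class of morphisms containing $\W \cup \{f\}$ that can serve as the weak equivalences of a model structure enlarging $(\W, \AF)$.

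The main work lies in showing $\AF' = \langle \AF \cup \Gamma_f \rangle$, for which I would prove two inclusions. For $\supseteq$, one checks that each Golden Arrow $\gamma \in \Gamma_f$ is both a fibration in the original structure and a weak equivalence in $\W'$, and that transfer-system closure preserves these two properties in the poset setting, so the generated transfer system lies inside $\mathrm{Fib} \cap \W' = \AF'$. For $\subseteq$, one argues that any arrow in $\mathrm{Fib} \cap \W'$ is produced from elements of $\AF \cup \{f\}$ by pullbacks, composites, and two-out-of-three completion, and that the indecomposability of $f$ prevents the two-out-of-three step from escaping $\langle \AF \cup \Gamma_f \rangle$.

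I expect the main obstacle to be the calibration of $\Gamma_f$: it must be just large enough to generate every new acyclic fibration created by the completion of $\W$, yet small enough not to introduce morphisms that were not fibrations to begin with. The indecomposability hypothesis on $f$ is what I expect to make this calibration possible, since it rules out the pathological factorizations through $f$ that would otherwise contaminate $\Gamma_f$. Once $\W'$ and $\AF'$ are identified, the Quillen axioms for $(\W', \AF')$---lifting, factorization, and closure under retracts---follow from the transfer-system machinery of \cite{FOOQW, BOOR23}, and the universal property identifying this structure with $R_f(\W, \AF)$ reduces to checking that any model structure containing $\W$ and $f$ in its weak equivalences and preserving the fibrations of $(\W, \AF)$ must contain $(\W', \AF')$, which is built into the iterative construction.
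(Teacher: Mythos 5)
Your outline correctly identifies the two-inclusion strategy $\langle \AF \cup \Gamma_f\rangle = \F \cap R_f(\W)$, and the $\supseteq$ direction you sketch is essentially the paper's Lemma \ref{lem:subset}: golden arrows are fibrations by the maximality condition defining $\Gamma_f$ (any factorization $g = p\circ i$ with $i$ an acyclic cofibration forces $i=\id$), they land in $R_f(\W)$ by construction, and then one closes up on both sides. That part is fine.

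The gap is in the $\subseteq$ direction, which is where the real content of the theorem lives. You write that one ``argues that any arrow in $\mathrm{Fib}\cap\W'$ is produced from elements of $\AF\cup\{f\}$ by pullbacks, composites, and two-out-of-three completion, and that the indecomposability of $f$ prevents the two-out-of-three step from escaping $\langle\AF\cup\Gamma_f\rangle$.'' This is not an argument but a restatement of the claim to be proved, and it also misattributes the load-bearing hypothesis. Indecomposability of $f$ is used only to reduce general Bousfield localizations to localizations at short arrows; it plays no role in controlling the two-out-of-three closure. What is actually needed is the following: given a new acyclic fibration $x\to y$, one must locate a golden arrow $g$ dominating it (Remark \ref{rem:goldensquare} guarantees its existence), form the set $B=\{z\mid x\le z<y,\ z<s(g)\}$, pick a maximal $b\in B$, and then show (i) $x\to b\in\AF$, by decomposability of $\W$ and a pullback square witnessing that $x\to b$ is a fibration, and (ii) $b\to y$ is a pullback (of a pullback) of $g$, via a pushout/pullback diagram through the pushout $w$ of $b\to s(g)$ along $b\to y$. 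Only then does $x\to y=(b\to y)\circ(x\to b)\in\langle\AF\cup\Gamma_f\rangle$. None of this factorization machinery appears in your sketch; the two-out-of-three closure cannot by itself exhibit an arbitrary new acyclic fibration as lying in the transfer-system closure of $\AF\cup\Gamma_f$, because $\Gamma_f$ is not obtained from $\AF\cup\{f\}$ by closure operations alone --- it depends on the geometry of the weak-equivalence classes, and the containment has to be checked arrow by arrow via the $b$ construction.

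A secondary imprecision: your iterated closure for $\W'$ alternates two-out-of-three with ``transfer-system closure'' of the weak equivalences directly, whereas what the paper actually iterates is a transfer-system step on the acyclic fibrations $\AF_{(n)}$ followed by a two-out-of-three closure of $\AF_{(n)}\circ\AC$ (Construction \ref{con:rf}); the weak equivalences themselves are not a transfer system, so one cannot simply close them under pullback. This matters because the criterion for when a decomposable subcategory is a valid $\W$ is Theorem \ref{thm:legalw}, which is phrased in terms of pushouts/pullbacks of short arrows, not a transfer-system condition on $\W$ as a whole.
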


This further generalizes a theorem in \cite[Section 5]{BOOR23}, which characterized Bousfield localization on the finite total orders $[n]$. 
Dually, we can give an analogous characterization of left Bousfield localization using the notion of co-transfer systems. 
Although co-transfer systems have been less studied than their dual counterparts, this contributes to a more comprehensive description of Bousfield localization overall, which in turn contributes to studying their interactions with other structures.

Another topic that this article addresses is the effect of Bousfield localization on special kinds of transfer systems known as \emph{saturated transfer systems}.
In recent work, saturated transfer systems have been strongly linked to equivariant linear isometries operads \cite{Rub21}.
Furthermore, crucial constructions of commutative $G$-spectra rely on the saturated hull of a transfer system rather than the just transfer system itself \cite{BH21} \cite{BHKKNOPST}. 
In our context, we investigate the following. 
While for total orders $[n]$, every model category structure can be written as a sequence of left and right localizations of the trivial model \cite{BOOR23}, one soon observes this to be false in general, and this article provides many counterexamples. 
Yet, saturation helps us to better understand this observation. 
Not only does localization preserve non-saturation, but we can say the following in Theorem \ref{thm:saturatedzigzag}, which provides another initially unexpected bridge between equivariant homotopy theory and model categories.

\begin{theorem*}
Let $(\W,\AF)$ be a model structure on $[m] \times [n]$ where $\AF$ is a saturated transfer system. Then this model structure can be written as applying a sequence of left and right localizations to the trivial model structure.
\end{theorem*}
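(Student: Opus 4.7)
The plan is to proceed by induction on the complexity of $\AF$ — for instance, the number of non-identity morphisms, or the size of a minimal generating set as a transfer system — starting from the trivial model structure on $[m]\times[n]$ (with $\W_0 = \AF_0 = \{\mathrm{ids}\}$) as base case. At each inductive step we will peel off a single indecomposable generator and reduce to a smaller saturated transfer system.

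The first and most important step is a structural lemma: any nontrivial saturated transfer system on $[m]\times[n]$ admits an indecomposable generator $f$ whose removal produces another saturated transfer system $\AF':=\langle \AF \setminus \{f\}\rangle$. The intuition is that saturation rules out ``non-removable'' composites, so a top-most generator can always be peeled off without destroying closure under pullback or saturation. I would prove this by a case analysis on the position of $f$ in the product $[m]\times[n]$, distinguishing purely horizontal, purely vertical, and genuinely ``diagonal'' generators, using that on a product of chains the Hasse structure of $\AF$ is already quite rigid.

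Given such a decomposition, the induction hypothesis supplies a sequence of left and right Bousfield localizations realizing some model structure $(\W',\AF')$ from the trivial one. Applying Theorem \ref{thm:golden} at $f$ to $(\W',\AF')$ then produces a new model structure whose acyclic fibrations are $\langle \AF' \cup \Gamma_f \rangle = \AF$ and whose weak equivalences contain $\W' \cup \langle\{f\}\rangle$ closed under the two-out-of-three property and transfer-system operations. To align the weak equivalences with the target $\W$, I would interleave left Bousfield localizations, using the dual characterization of Theorem \ref{thm:golden} via co-transfer systems to add missing weak equivalences and to correct the acyclic-cofibration side. On a finite lattice, once $\AF$ and $\AC$ are fixed the weak equivalences are rigidly determined, so matching both guarantees matching of $\W$.

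The main obstacle is the structural peeling lemma: unlike on a total order $[n]$, transfer systems on $[m]\times[n]$ can exhibit cascading dependencies under pullback closure, and it is precisely saturation that severs these dependencies so that a single generator can be removed without forcing others out. Verifying this will require careful combinatorial bookkeeping, and the known counterexamples for non-saturated $\AF$ should act as a guide, showing exactly where saturation gets used. A secondary difficulty is orchestrating the alternation of left and right localizations so that neither the two-out-of-three closure on $\W$ nor the transfer-system closure on $\AF$ overshoots the target; here the explicit description of the golden arrows $\Gamma_f$ provided by Theorem \ref{thm:golden} is exactly what makes the control tractable.
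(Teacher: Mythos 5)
Your approach differs fundamentally from the paper's, and it contains a critical gap. You propose to induct on the size (or generating-set size) of $\AF$, peeling off one indecomposable generator at a time; the paper instead inducts on the grid dimension, exploiting the recursion for saturated transfer systems from \cite{HMOO} (reproduced here as Construction \ref{con:saturated}), which reduces a saturated transfer system on $[m]\times[n+1]$ to a saturated transfer system $T_A$ on $A\times[n]$ for some $A\subseteq[m]$. That recursion comes with a known classification, whereas your ``peeling lemma'' would need to be proved from scratch, and you have given no argument beyond a proposed case split. You are right to flag it as an obstacle, but it is not actually where your sketch breaks down.

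The critical gap is the claim that $\langle\AF'\cup\Gamma_f\rangle=\AF$. The Golden Arrows $\Gamma_f$ are defined via the weak-equivalence classes of $R_f(\W')$, not via $\AF'$ and $f$ alone. Your induction hypothesis hands you \emph{some} model structure $(\W',\AF')$ realizable from the trivial one, but gives you no control over $\W'$. If $\W'$ is large, the sets $S(\sigma)$ and $T(\sigma)$ in Definition \ref{def:goldenarrows} span large equivalence classes, so $\Gamma_f$ contains arrows strictly longer than $f$, and $\langle\AF'\cup\Gamma_f\rangle$ can strictly exceed $\AF$. The paper's fix is precisely what is missing from your sketch: Lemma \ref{lem:wequalst} shows $(T,T)$ is a valid model structure whenever $T$ is saturated, and a pure sequence of right localizations of the trivial model structure automatically keeps $\W=\AF$ at every stage, since right localization never adds acyclic cofibrations. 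Keeping $\W=\AF$ makes every weak-equivalence class as small as possible, so the Golden Arrows stay short and the right localizations build up exactly $\AF$. Only after reaching $(\AF,\AF)$ does the paper apply left localizations to enlarge $\W$, which is safe because left localization fixes $\AF$. Your proposal defers this control to ``the explicit description of $\Gamma_f$,'' but Theorem \ref{thm:golden} does not by itself bound $\Gamma_f$; that bound comes entirely from keeping $\W$ minimal, which your induction does not enforce.
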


We would like to note that while our work is presented for finite lattices, we expect many of the concepts to generalize to arbitrary lattices. 

This paper is organized as follows.
We will gather the necessary terminology in Section \ref{sec:background} where we highlight some general properties of model category structures on posets as well as a summary of left and right Bousfield localization, before linking these to the language of transfer systems. With this in place, we move on to Section \ref{sec:rightloc}, where we present our construction on how transfer systems change under right Bousfield localization by adding just a minimal set of arrows, before presenting the dual constructions involving left localization and co-transfer systems in Section \ref{sec:dual_story}. 
Finally, in Section \ref{sec:saturation} we introduce the concept of saturation and examine its interaction with localization.

\section*{Acknowledgments}
This work originated as a group project in the 2024 AMS Mathematics Research Community ``Homotopical Combinatorics''. All authors gratefully acknowledge travel support from the AMS, and we would like to thank all organizers and fellow participants for creating an inspiring and supportive environment. 
AC was supported by UNAM Posdoctoral Program (POSDOC).
SG would like to thank the Krea University for further support.
SM would like to thank Raghav Malik for all the fruitful conversations on the subject matter.
CR thanks the University of Kent for further travel support, and the Isaac Newton Institute Cambridge for their support during the programme ``Equivariant Homotopy Theory in Context'' via EPSRC grant EP/Z000580/1.

Finally, we thank Devin Hensley for her involvement and enthusiasm during the week of our AMS MRC meeting.

\section{Background and conventions}\label{sec:background}

\subsection{Model Category Structures}\label{subsec:modelcats}
We assume that the reader is familiar with the basic notions related to model categories, see e.g. \cite[Chapter A.1]{BR20}. 
We will however reintroduce some definitions and properties that are central to the methods used in our paper. 

\begin{definition}\label{def1.1}
For any two morphisms $i \colon A \to B$ and $p \colon X \to Y$ in a category $\Cate$, we say that $i$ \emph{has the left lifting property (LLP) with respect to $p$}, or $p$ \emph{has the right lifting (RLP) property with respect to $i$},
if for all commutative squares of the form
\begin{equation}\label{liftdiagram}
\begin{tikzcd}
A \arrow[r] \arrow[d, "i"']         & X \arrow[d, "p"] \\
B \arrow[r] \arrow[ru, "h", dashed] & Y               
\end{tikzcd}    
\end{equation}
there exists a lift $h \colon B \to X$ which makes the resulting diagram commute.
If $i$ lifts on the left of $p$ we write $i \boxslash p$. 
For any class $\mathcal{S}$ of morphisms in $\Cate$ we write
\begin{align*}
\mathcal{S}^{\boxslash}& = \{g \in \operatorname{Mor}(\Cate) \mid f \boxslash g \text{ for all } f \in \mathcal{S} \},\\
{}^{\boxslash} \mathcal{S} &= \{f \in \operatorname{Mor}(\Cate) \mid f \boxslash g \text{ for all } g \in \mathcal{S} \}.
\end{align*}

\end{definition}

Note that $\mathcal{S} \subseteq {}^\boxslash \mathcal{T}$ if and only if $\mathcal{T} \subseteq \mathcal{S}^\boxslash$. 
We write $\mathcal{S}\boxslash \mathcal{T}$ when this holds.

\begin{definition}\label{D:2_out_of_3}
    Given any collection of morphisms in a category $M \subset \operatorname{Mor}(\C)$, we say that $M$ has the \emph{two-out-of-three property} if for any pair of composable morphisms $f,g$ in $\operatorname{Mor}(\C)$ where any two of $f,g$, or $g\circ f$ are in $M$, then the third must also be in $M$. 
    \end{definition}
    Diagrammatically, this definition is say if any two arrows in the diagram below are in $M$, then they must all be in $M$.
    
\[\begin{tikzcd}
	& \bullet \\
	\bullet && \bullet
	\arrow["g", from=1-2, to=2-3]
	\arrow["f", from=2-1, to=1-2]
	\arrow["{g \circ f}"', from=2-1, to=2-3]
\end{tikzcd}\]

\begin{definition}\label{def1.3}
A \emph{model category} is a category $\Cate$ equipped with three distinguished classes of morphisms, namely
\begin{itemize}
    \item weak equivalences -- $\W$ -- whose elements we will represent as $\xymatrix{X \ar[r]^{\sim} & Y}$,
    \item fibrations -- $\F$ -- whose elements we will represent as $\xymatrix{X \ar@{->>}[r] & Y}$,
    \item cofibrations -- $\Cof$ -- whose elements we will represent as $\xymatrix{X \ar@{^(->}[r] & Y}$,
\end{itemize}
each of which is closed under composition. A morphism in $\AF:=\W \cap \F$ (resp., $\AC:=\W \cap \Cof$) is said to be an \emph{acyclic fibration} (resp., \emph{acyclic cofibration}). These distinguished classes of morphisms and the category $\Cate$ are required to satisfy the following axioms.
\begin{enumerate}[align=left]
    \item[MC1)] The category $\Cate$ has all finite limits and colimits. 
    \item[MC2)] The class $\W$ satisfies the two-out-of-three property. 
    \item[MC3)] The three distinguished classes of morphisms are closed under retracts in the arrow category.
    \item[MC4)] Given a commutative diagram of the form (\ref{liftdiagram}), a lift exists when either $i$ is a cofibration and $p$ is an acyclic fibration, or when $i$ is an acyclic cofibration and $p$ is a fibration.
    \item[MC5)] Each morphism $f$ in $\Cate$ an be factored in two ways:
    \begin{enumerate}
        \item[1.] $f=p\circ i$, where $i$ is a cofibration and $p$ is an acyclic fibration.
        \item[2.] $f=p\circ i$, where $p$ is a fibration and $i$ is an acyclic cofibration.
    \end{enumerate}
\end{enumerate}
If objects $X$ and $Y$ lie in the same weak equivalence class then we shall write $X \simeq Y$. A specific choice of $\W, \F ,\Cof \subset \operatorname{Mor} (\Cate)$ is called a \textit{model structure} on $\Cate$.
\end{definition}
It is worth noting that the data of a model structure on a category $\Cate$ is overdetermined by sets of morphisms $\W ,\F ,\Cof, \AF$ and $\AC$ satisfying the conditions above.
In fact, only knowing e.g. $\W$ and $\AF$ or knowing $\W$ and $\AC$ suffices to determine the entire model structure.
We will adopt the convention of using the weak equivalences and acyclic fibrations, $\W$ and $\AF$, to describe a model category.
Hence, we write $(\W,\AF)_\Cate$ to denote a specific model structure determined by $\W$ and $\AF$ on a category $\Cate$, dropping the subscript when there is no risk of confusion. 

Let $\Cate$ be a complete and cocomplete category equipped with a model structure. 
If only the isomorphisms in $\Cate$ are weak equivalences, we say that the model structure is \emph{trivial}.

\subsection{Model Structures on Lattices and Transfer Systems}
Recall that a poset $\mathcal{P}=(P,\leq)$ is a category where $\mathrm{Ob}(\mathcal{P})=P$ and for any two objects $x,y$ there 
is at most one morphism, more precisely: 
$$\mathrm{Hom}_\C(x,y)=\left\lbrace\begin{array}{cc}
    \{\ast\} & \mbox{if }x\leq y, \\
    \emptyset &  \mbox{else}.
\end{array}\right.$$
In this category, given two objects $x,y$, their product, if it exists, is an object $z$ such that $z\leq x,y$ and for any other object 
$w\leq x,y$ we have that $w\leq z$. 
This means, $z$ is the \textit{meet} (the infimum) $x\wedge y$. In the same fashion, the coproduct is the 
\textit{join} (the supremum) $x\vee y$. A poset is a \textit{lattice} if for any two elements their join and their meet exist.

When we consider a model structure on a lattice $\mathcal{P}$, the axiom MC3 in Definition \ref{def1.3}, tells us that the three classes of morphisms must be closed under retracts in the arrow category.
In lattices there are no non-trivial retracts, as in a diagram
\begin{equation*}
    \xymatrix{
    x \ar@{->}[r] \ar@{->}[d] & z \ar@{->}[r] \ar@{->}[d] & x\ar@{->}[d]\\
    y \ar@{->}[r] & w \ar@{->}[r] & y,}
\end{equation*}
we have that $x\leq z\leq x$ and $y\leq w\leq y$ and therefore $x=z$ and $w=y$.
On a lattice, we furthermore have a stronger axiom than that of MC2 in Definition \ref{def1.3}. 
We call a class $\mathcal{S}$ of morphisms is \textit{decomposable} if for any morphism $f=g \circ h$ in $\mathcal{S}$, then both $g$ and $h$ are in $\mathcal{S}$, see \cite[Lemma 3.15]{MORSVZ2} or \cite[Proposition 1.8]{DZ21}.

\begin{lemma}
Let $\mathcal{P}$ be a lattice. 
Then the weak equivalences of any model structure on $\mathcal{P}$ are decomposable.
\end{lemma}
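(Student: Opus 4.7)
The plan is to leverage MC5 together with the lattice structure, using the meet $v \wedge y$ as a bridge between the given factorization $f = g \circ h$ and a chosen MC5 factorization of $f$. First, apply MC5(1) to $f$ to write $f = p \circ i$ with $i \colon x \to v$ a cofibration and $p \colon v \to z$ an acyclic fibration; since $f$ and $p$ are weak equivalences, MC2 upgrades $i$ to an acyclic cofibration, so $i \in \AC$. This produces a second path from $x$ to $z$ through an intermediate object $v$, parallel to the given path through $y$.

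The key geometric step is to form the meet $v \wedge y$ in the lattice. The pullback of $p \colon v \to z$ along $g \colon y \to z$ in $\mathcal{P}$ is the unique morphism $v \wedge y \to y$, and since $\AF$ is closed under pullback, this morphism lies in $\AF$. Dually, the pushout of $i \colon x \to v$ along the unique morphism $x \to v \wedge y$ has pushout object $v \vee (v \wedge y) = v$ by absorption, and the resulting cobase change is the morphism $v \wedge y \to v$; since $\AC$ is closed under pushout, this morphism lies in $\AC$. Both constructions are available precisely because $\mathcal{P}$ is a lattice, so meets and joins exist.

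Because $\mathcal{P}$ is a poset, any morphism between comparable objects is uniquely determined, so $h$ factors as the composite $x \to v \wedge y \to y$ and $i$ factors as $x \to v \wedge y \to v$. Two applications of the two-out-of-three property now close the argument: from $i = (v \wedge y \to v) \circ (x \to v \wedge y)$ with both $i$ and $(v \wedge y \to v)$ in $\W$ we conclude $(x \to v \wedge y) \in \W$; then from $h = (v \wedge y \to y) \circ (x \to v \wedge y)$ with both factors in $\W$ we conclude $h \in \W$. A final application of MC2 to $f = g \circ h$ yields $g \in \W$.

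The main subtlety to keep in mind is that MC2 alone never yields decomposability; genuine additional input is required. That input is the lattice structure, which allows the meet $v \wedge y$ to play simultaneously the role of a pullback (inheriting acyclic fibration structure from $p$) and a pushout (inheriting acyclic cofibration structure from $i$). Once this double role of $v \wedge y$ is in place, the rest is routine two-out-of-three bookkeeping.
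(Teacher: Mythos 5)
Your proof is correct, and the core move — producing a cofibration/acyclic-fibration factorization $f = p\circ i$, upgrading $i$ to an acyclic cofibration via MC2, and then using the meet $v\wedge y$ simultaneously as the pullback of $p$ along $g$ and the pushout of $i$ along $x\to v\wedge y$ to deposit factors of $h$ into $\AF$ and $\AC$ — is exactly the argument in the cited sources. The paper itself does not reprove this lemma, simply deferring to \cite[Lemma 3.15]{MORSVZ2} and \cite[Proposition 1.8]{DZ21}, so there is no internal proof to compare against, but your self-contained argument is the standard one and is complete (note only that the ``join'' you invoke is trivial, since $v\vee(v\wedge y)=v$ by absorption, so strictly only the existence of meets is used).
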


Therefore, we will be able to fully describe a weak equivalence set $\W$ in terms of only its indecomposable arrows.

Next, we turn to the acyclic fibrations. 
It turns out, the morphisms that define a set of acyclic fibrations  satisfy the following properties which also define the notion of a \emph{transfer system}.

\begin{definition}\label{def:transfer}
     A \emph{transfer system} in a category $\Cate$ is a wide subcategory of $\Cate$ which is closed under pullbacks by arbitrary morphisms in $\Cate$.
\end{definition}

To better understand this categorical definition, consider the subgroup lattice of a finite group \(G\), denoted by \(\mathrm{Sub}(G)\). (For simplicity of presentation, we assume our group to be abelian.)
We consider \(\mathrm{Sub}(G)\) as a category where the objects are subgroups of $G$, and the morphism $H \rightarrow K$ exists if $H$ is a subgroup of $K$.
For example, for a cyclic group of order $p^nq^m$ where $p \neq q$ are primes, the subgroup lattice $Sub(C_{p^nq^m})$ is a product of total orders $[n]\times[m]$.

In this situation, Definition \ref{def:transfer} becomes the following.

\begin{definition}
    A transfer system is a partial order on \(\mathrm{Sub}(G)\) that refines inclusion, and is closed under conjugation and restriction.
    More explicitly, a set of morphisms $M \subset \Mor(Sub(G))$ is called a transfer system if the following hold. 
    \begin{itemize}
        \item If $H < K \in M$ and $K< L \in M$, then $H<L \in M$ (composition).
        \item If $H<K \in M$ and $L \leq G$, then $H \cap L < K \cap L \in M$. (restriction).
    \end{itemize}
\end{definition}

\begin{convention}
Note that in our diagrammatic representations of grid-type lattices $[n] \times [m]$, we use a coordinate-type depiction, i.e. the origin $(0,0)$ is located at the bottom left of the grid, and $(n,m)$ is the top right vertex. 
\end{convention} 

\begin{example}\label{ex:1by1}
   In Figure \ref{fig:ten} we depict all ten possible transfer systems on the lattice \([1] \times [1]\).

\begin{figure}[H]
\begin{tikzpicture}[scale=1.1]
 \draw[blue!30, rounded corners] (0.6, 0.6) rectangle (2.4,2.4); 
 \foreach \x in {1,2}
 \foreach \y in {1,2}
    \fill (\x,\y) circle (1mm);
\end{tikzpicture}
\hspace{.5cm}
\begin{tikzpicture}[scale=1.1]
 \draw[blue!30, rounded corners] (0.6, 0.6) rectangle (2.4,2.4); 
 \foreach \x in {1,2}
 \foreach \y in {1,2}
    \fill (\x,\y) circle (1mm);
    \node at (1,2) (Sf) {}; 
    \node at (2,2) (Tf) {}; 
    \node at (2,1) (Tg1) {};
    \node at (1,1) (Sg1) {};
    \draw[line width=.5mm, black, -{stealth}] (Sg1) edge node[above]{} (Tg1);
\end{tikzpicture}
\hspace{.5cm}
\begin{tikzpicture}[scale=1.1]
 \draw[blue!30, rounded corners] (0.6, 0.6) rectangle (2.4,2.4); 
 \foreach \x in {1,2}
 \foreach \y in {1,2}
    \fill (\x,\y) circle (1mm);
    \node at (1,2) (Sf) {}; 
    \node at (2,2) (Tf) {}; 
    \node at (2,1) (Tg1) {};
    \node at (1,1) (Sg1) {};
    \draw[line width=.5mm, black, -{stealth}] (Sg1) edge node[above]{} (Sf);
\end{tikzpicture}
\hspace{.5cm}
\begin{tikzpicture}[scale=1.1]
 \draw[blue!30, rounded corners] (0.6, 0.6) rectangle (2.4,2.4); 
 \foreach \x in {1,2}
 \foreach \y in {1,2}
    \fill (\x,\y) circle (1mm);
    \node at (1,2) (Sf) {}; 
    \node at (2,2) (Tf) {}; 
    \node at (2,1) (Tg1) {};
    \node at (1,1) (Sg1) {};
    \draw[line width=.5mm, black, -{stealth}] (Sf) edge node[above]{} (Tf);
    \draw[line width=.5mm, black, -{stealth}] (Sg1) edge node[above]{} (Tg1);
\end{tikzpicture}
\hspace{.5cm}
\begin{tikzpicture}[scale=1.1]
 \draw[blue!30, rounded corners] (0.6, 0.6) rectangle (2.4,2.4); 
 \foreach \x in {1,2}
 \foreach \y in {1,2}
    \fill (\x,\y) circle (1mm);
    \node at (1,2) (Sf) {}; 
    \node at (2,2) (Tf) {}; 
    \node at (2,1) (Tg1) {};
    \node at (1,1) (Sg1) {};
    \draw[line width=.5mm, black, -{stealth}] (Sg1) edge node[above]{} (Sf);
     \draw[line width=.5mm, black, -{stealth}] (Sg1) edge node[above]{} (Tg1);
\end{tikzpicture}\\
\vspace{.5cm}
\begin{tikzpicture}[scale=1.1]
 \draw[blue!30, rounded corners] (0.6, 0.6) rectangle (2.4,2.4); 
 \foreach \x in {1,2}
 \foreach \y in {1,2}
    \fill (\x,\y) circle (1mm);
    \node at (1,2) (Sf) {}; 
    \node at (2,2) (Tf) {}; 
    \node at (2,1) (Tg1) {};
    \node at (1,1) (Sg1) {};
    \draw[line width=.5mm, black, -{stealth}] (Sg1) edge node[above]{} (Sf);
    \draw[line width=.5mm, black, -{stealth}] (Tg1) edge node[above]{} (Tf);
\end{tikzpicture}
\hspace{.5cm}
\begin{tikzpicture}[scale=1.1]
 \draw[blue!30, rounded corners] (0.6, 0.6) rectangle (2.4,2.4); 
 \foreach \x in {1,2}
 \foreach \y in {1,2}
    \fill (\x,\y) circle (1mm);
    \node at (1,2) (Sf) {}; 
    \node at (2,2) (Tf) {}; 
    \node at (2,1) (Tg1) {};
    \node at (1,1) (Sg1) {};
    \draw[line width=.5mm, black, -{stealth}] (Sg1) edge node[above]{} (Sf);
    \draw[line width=.5mm, black, -{stealth}] (Sg1) edge node[above]{} (Tf);
     \draw[line width=.5mm, black, -{stealth}] (Sg1) edge node[above]{} (Tg1);
\end{tikzpicture}
\hspace{.5cm}
\begin{tikzpicture}[scale=1.1]
 \draw[blue!30, rounded corners] (0.6, 0.6) rectangle (2.4,2.4); 
 \foreach \x in {1,2}
 \foreach \y in {1,2}
    \fill (\x,\y) circle (1mm);
    \node at (1,2) (Sf) {}; 
    \node at (2,2) (Tf) {}; 
    \node at (2,1) (Tg1) {};
    \node at (1,1) (Sg1) {};
    \draw[line width=.5mm, black, -{stealth}] (Sg1) edge node[above]{} (Sf);
    \draw[line width=.5mm, black, -{stealth}] (Tg1) edge node[above]{} (Tf);
    \draw[line width=.5mm, black, -{stealth}] (Sg1) edge node[above]{} (Tf);
     \draw[line width=.5mm, black, -{stealth}] (Sg1) edge node[above]{} (Tg1);
\end{tikzpicture}
\hspace{.5cm}
\begin{tikzpicture}[scale=1.1]
 \draw[blue!30, rounded corners] (0.6, 0.6) rectangle (2.4,2.4); 
 \foreach \x in {1,2}
 \foreach \y in {1,2}
    \fill (\x,\y) circle (1mm);
    \node at (1,2) (Sf) {}; 
    \node at (2,2) (Tf) {}; 
    \node at (2,1) (Tg1) {};
    \node at (1,1) (Sg1) {};
    \draw[line width=.5mm, black, -{stealth}] (Sg1) edge node[above]{} (Sf);
    \draw[line width=.5mm, black, -{stealth}] (Sg1) edge node[above]{} (Tf);
    \draw[line width=.5mm, black, -{stealth}] (Sf) edge node[above]{} (Tf);
    \draw[line width=.5mm, black, -{stealth}] (Sg1) edge node[above]{} (Tg1);
\end{tikzpicture}
\hspace{.5cm}
\begin{tikzpicture}[scale=1.1]
 \draw[blue!30, rounded corners] (0.6, 0.6) rectangle (2.4,2.4); 
 \foreach \x in {1,2}
 \foreach \y in {1,2}
    \fill (\x,\y) circle (1mm);
    \node at (1,2) (Sf) {}; 
    \node at (2,2) (Tf) {}; 
    \node at (2,1) (Tg1) {};
    \node at (1,1) (Sg1) {};
    \draw[line width=.5mm, black, -{stealth}] (Sg1) edge node[above]{} (Sf);
    \draw[line width=.5mm, black, -{stealth}] (Tg1) edge node[above]{} (Tf);
    \draw[line width=.5mm, black, -{stealth}] (Sg1) edge node[above]{} (Tf);
    \draw[line width=.5mm, black, -{stealth}] (Sg1) edge node[above]{} (Tg1);
    \draw[line width=.5mm, black, -{stealth}] (Sf) edge node[above]{} (Tf);
\end{tikzpicture}
\caption{Transfer Systems on \([1] \times [1]\)}
\label{fig:ten}
\end{figure}
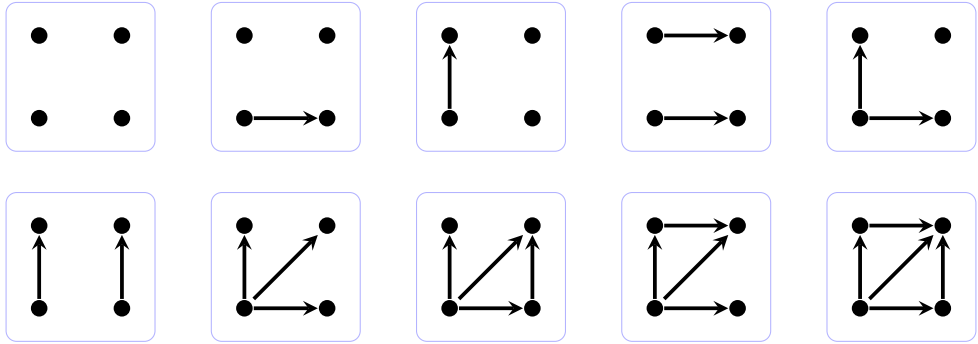

\end{example}

A key ingredient of our work is the following observation, which holds because acyclic fibrations are closed under pullbacks.

\begin{proposition}\label{prop:transferaf}
Let $P$ be a lattice equipped with a model structure. Then the acyclic fibrations on $P$ form a transfer system. 
\end{proposition}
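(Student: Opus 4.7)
The plan is to verify the two requirements of Definition \ref{def:transfer}: that $\AF$ forms a wide subcategory of $P$, and that it is closed under pullbacks along arbitrary morphisms of $P$.

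Wideness amounts to two checks. First, every identity lies in $\AF$: identities are isomorphisms, and in any model category isomorphisms belong to both $\W$ and $\F$ since they satisfy every lifting property. Second, $\AF$ is closed under composition, which is immediate from Definition \ref{def1.3} because both $\W$ and $\F$ are closed under composition, and hence so is their intersection $\AF = \W \cap \F$.

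The substantive step is closure under pullbacks. Here I would invoke the standard characterization $\AF = \Cof^{\boxslash}$ holding in any model category, which is a consequence of MC4, MC5, and the retract argument (and is especially clean on a lattice, where retracts are trivial as already noted in the text). Any class of morphisms defined by a right lifting property is automatically closed under pullbacks: given $f \in \AF$, a morphism $g$ into the target of $f$, and the pullback $f'$ of $f$ along $g$, a lifting square against any cofibration $i$ for $f'$ can be composed with the pullback projection to yield a lifting square for $f$ itself; solving that problem and combining the resulting map with the given map into the source of $g$ produces, by the universal property of the pullback, the required lift for $f'$. Concretely in the lattice setting, the pullback of $f \colon x \to y$ along $g \colon z \to y$ is simply the meet-morphism $x \wedge z \to z$, which exists because $P$ is a lattice.

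I do not anticipate any genuine obstacle here, as the argument is a routine consequence of standard model category theory combined with the poset structure. The only subtlety worth flagging is the existence of pullbacks in $P$, which is guaranteed by the lattice hypothesis together with finiteness (so that MC1 is satisfied); writing the pullback explicitly as $x \wedge z$ makes the diagram chase transparent.
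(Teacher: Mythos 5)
Your proof is correct and follows the same reasoning the paper sketches: the paper simply remarks before Proposition \ref{prop:transferaf} that it "holds because acyclic fibrations are closed under pullbacks," and you have correctly unwound this into wideness (identities plus closure under composition from $\AF = \W \cap \F$) together with the standard fact that $\AF = \Cof^{\boxslash}$ is closed under pullbacks by a lifting diagram chase. The only notational slip is that what you call "the given map into the source of $g$" should more precisely be the given bottom map of the lifting square for $f'$, but the intended diagram is clear and correct.
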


\begin{remark}\label{rem:tmin}
    Even though the collection of $\AF$s is a transfer system, the converse is not true -- not every transfer system on a given poset appears as the $\AF$ of a valid model structure.
    For example, on the poset $[1] \times [1]$, there is no model structure with $\W=\{ (0,0) \rightarrow (1,0) \}$ and trivial $\AF$, even though the trivial transfer system is, of course, a transfer system contained in $\W$, see \cite[Remark 3.15]{BOOR23}.
\end{remark}

In fact, Remark \ref{rem:tmin} is an example of the following result from \cite[Theorem 4.20]{MORSVZ2},
which states that a transfer system arises as the $\AF$ of a model structure if and only if it contains a fixed minimal transfer system.

\begin{theorem}[MORSVZ]\label{thm:tmin}
Let $P$ be a finite lattice, and let $\W$ be set of weak equivalences of a model structure on $P$. 
Then there is a minimal transfer system $T_{min}$ depending on $\W$ such that the following are equivalent.
\begin{itemize}
\item $T$ is a transfer system with $T_{min} \subseteq T \subseteq \W$,
\item $(W, T)$ is a model structure, i.e. there is a model structure with weak equivalences $\W$ and $T=\AF.$
\end{itemize}
\end{theorem}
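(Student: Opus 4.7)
The plan is to define the collection
\[
\mathcal{T}(\W) := \{T \subseteq \W : T \text{ is a transfer system and } (\W, T) \text{ is a model structure}\},
\]
which is nonempty by hypothesis, and to set $T_{min} := \bigcap_{T \in \mathcal{T}(\W)} T$. Since the intersection of wide subcategories closed under pullback is again a wide subcategory closed under pullback, $T_{min}$ is a transfer system. The forward direction of the equivalence is then immediate from minimality: any model structure $(\W, T)$ forces $T \in \mathcal{T}(\W)$, and hence $T_{min} \subseteq T \subseteq \W$.

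For the reverse direction, I would establish two closure properties of $\mathcal{T}(\W)$. The first is closure under intersection: given $T_1, T_2 \in \mathcal{T}(\W)$ and $T := T_1 \cap T_2$, one sets $\Cof := {}^{\boxslash} T$, $\AC := \W \cap \Cof$, $\F := \AC^{\boxslash}$, and checks the consistency $\AF := \W \cap \F = T$. The nontrivial inclusion $\AF \subseteq T$ follows neatly from $\Cof \supseteq \Cof_i$, which yields $\AC \supseteq \AC_i$, hence $\F \subseteq \F_i$, so $\AF = \W \cap \F \subseteq \W \cap \F_i = T_i$ for each $i$, giving $\AF \subseteq T_1 \cap T_2 = T$. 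The reverse inclusion $T \subseteq \AF$ is direct from $T \subseteq T_i \subseteq \F_i$ and orthogonality. The second closure property is that $\mathcal{T}(\W)$ is upward-closed in the poset of transfer systems contained in $\W$: whenever $T \in \mathcal{T}(\W)$ and $T \subseteq T' \subseteq \W$ is a transfer system, we have $T' \in \mathcal{T}(\W)$. Combining these two properties yields $\mathcal{T}(\W) = \{T \text{ transfer system} : T_{min} \subseteq T \subseteq \W\}$, which is the desired equivalence.

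The main obstacle in both closure arguments is verifying the factorization axiom MC5. On a finite lattice, factorizations of a morphism $f \colon a \to b$ correspond to choosing an intermediate object $z \in [a, b]$ with $a \to z$ and $z \to b$ in the required classes. The key combinatorial tool is that $\Cof$, being a left orthogonal class, is closed under pushouts, and in a poset pushouts correspond to joins; this lets us consider the canonical intermediate $z_{max} := \bigvee \{z \in [a, b] : a \to z \in \Cof\}$, for which $a \to z_{max} \in \Cof$ automatically. One then argues that $z_{max} \to b$ must lie in $T$, using pullback closure of $T$ together with the 2-out-of-three property of $\W$. The dual construction via meets handles the $\AC$--$\F$ factorization. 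Carrying this combinatorial argument out in detail, both for the intersection $T_1 \cap T_2$ and for an arbitrary enlargement $T \supseteq T_{min}$, is the technical heart of the proof and where the precise lattice structure of $P$ enters most critically.
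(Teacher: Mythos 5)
The paper does not prove this theorem; it cites it as \cite[Theorem 4.20]{MORSVZ2} and supplies the construction in Remark \ref{rem:tminconstruct}, namely $T_{min} = K^\boxslash \cap \W$ where $K$ is the largest co-transfer system contained in $\W$. Your approach is different: you define $T_{min}$ implicitly as $\bigcap_{T \in \mathcal{T}(\W)} T$ and try to deduce the theorem from two closure properties of $\mathcal{T}(\W)$. That strategy is structurally reasonable, but both closure properties \emph{are} the substantive content of the theorem, and neither is actually established. Upward closure is asserted without any argument. For intersection closure you carry out only a consistency check $\W \cap \F = T$, and even there the justification for $T \subseteq \AF$ is off: from $T \subseteq T_i \subseteq \F_i$ you cannot conclude $T \subseteq \F$, because $\F \subseteq \F_i$, not the reverse. (The correct chain is $T \subseteq ({}^\boxslash T)^\boxslash = \Cof^\boxslash \subseteq \AC^\boxslash = \F$.)

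The deeper gap is the identification of the binding axiom. You single out MC5, but as the paper explains in the pre-model-structure discussion following Theorem \ref{thm:legalw}, for any transfer system $T$ on a finite lattice the pair $({}^\boxslash T, T)$ is already a weak factorization system, so the $(\Cof, T)$-factorization that your $z_{max}$ construction produces exists automatically and is not the obstruction. What can fail, and what actually has to be checked, is that $\AC := \W \cap {}^\boxslash T$ is closed under pushouts (equivalently, per the paper, that $T \circ \AC$ satisfies two-out-of-three); this is precisely the criterion of \cite[Theorem 4.5]{MORSVZ2} invoked in the proof of Lemma \ref{lem:wequalst}. Your $z_{max}$ argument, which tries to use the two-out-of-three property of $\W$ together with pullback closure of $T$ for an arbitrary morphism $a \to b$ (which need not lie in $\W$ at all), does not touch this. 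A correct version of your program would prove both closure properties by verifying the pushout-closure criterion directly, and it would be worth noting that your $T_{min}$ then agrees with the explicit $K^\boxslash \cap \W$ of the paper, which has the advantage of being computable without first enumerating $\mathcal{T}(\W)$.
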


\begin{remark}\label{rem:tminconstruct}
The construction of $T_{min}$ can be made explicit.
Given a set of weak equivalences $\W$, we start with $K$ the largest co-transfer system contained in $\W$. (Here, a co-transfer system is a wide subcategory closed under pushouts, see Definition \ref{def:cotransfer}.) We consider $K$ to be the largest possible set of acyclic cofibrations in $\W$.
Then, $T_{min} = K^\boxslash \cap \W$, where $K^\boxslash$ denotes all morphisms with the RLP with respect to $K$.
For example, in Remark \ref{rem:tmin} one obtains $T_{min}=\W.$
\end{remark}

\subsection{Left and Right Bousfield Localization}\label{sec:bousfieldloc}
When given a model structure on a category, it is generally of interest to create other model structures on the same category by means of the original model structure. 
One strategy for doing this is to enlarge the class of weak equivalences.
Bousfield localizations provide a universal way of executing this strategy.

Starting with a model structure on a category, let us denote the \emph{left Bousfield localization} of a model structure $(\W,\AF)$ by 
\[
L_f(\W, \AF) = (L_f(\W), L_f(\AF))
\]
as well as write $L_f(\Cof),L_f(\F),L_f(\AC)$ for the respective cofibrations, fibrations and acyclic fibrations after left localization.
The set $L_f(\W)$ is the smallest weak equivalence set containing $f$ and $\W$ in some universal sense, while $L_f(\Cof):=\Cof$. 
Dually, the \emph{right localization} of $(\W,\AF)$ is denoted by  
\[
R_f(\W, \AF) = (R_f(\W), R_f(\AF)),
\]
and we have the sets $R_f(\Cof),R_f(\F)$ and $R_f(\AC)$. Again, $R_f(\W)$ is the smallest weak equivalence set containing $\W$ and $f$ and satisfying a universal property, and $R_f(\F):=\F$.
For a detailed treatment of Bousfield localization, see \cite[Chapter 7.1]{BR20}, but we would like to remark that on a lattice, the usual set-theoretic hurdles for the existence of a Bousfield localization become trivial.

Of course, when altering one class of morphisms in a model structure, one is typically interested in how this alteration affects the other classes of maps, and whether one can control this alteration. A basic consequence of the definitions is the following.
\begin{itemize}
\item $L_f(\W) \supseteq \W$,
\item $L_f(\Cof) = \Cof$ and $L_f(\AF) = \AF$,
\item $L_f(\AC)\supseteq \AC$,
\item $L_f(\F) \subseteq \F$.
\end{itemize}
For right Bousfield localization, dually, we have
\begin{itemize}
\item $R_f(\W) \supseteq \W$,
\item $R_f(\F) = \F$ and $R_f(\AC) = \AC$,
\item $R_f(\AF)\supseteq \AF$,
\item $R_f(\Cof) \subseteq \Cof$.
\end{itemize}
As described above, left localization does not change acyclic fibrations, so $L_f(\AF)=\AF$.
However, $R_f(\AF) \supset \AF$.
Therefore, our question now becomes, how does right localization affect the transfer system $\AF$?
In the case where the lattice is the total order lattice, $[n]$, this has been answered by \cite[Proposition 5.12]{BOOR23} and we recall their result here. 

\begin{proposition}\label{prop:totalordergoldenarrow}
Suppose that the poset $[n]$ possesses a model structure $\C$ where  for $0 \leq i < n$, $i \longrightarrow i+1$ is not a weak equivalence. 
Then the acyclic fibrations after right Bousfield localization at $i \rightarrow i+1$ are the acyclic fibrations of $\C$ with the addition of the arrows $m' \rightarrow j'$, where
\[i < j' \leq j.\]
Here, $m' \rightarrow i$ are acyclic fibrations in the old model structure, and $j$ is the largest number such that $i+1 \rightarrow j$ is a weak equivalence and $i+1 \rightarrow j+1$ is not. 

In particular, all the arrows of the form $i \rightarrow j'$, $i< j' \leq j$ are new acyclic fibrations.
\end{proposition}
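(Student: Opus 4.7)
The approach reduces to computing $R_f(\W) \cap \F$, exploiting that right Bousfield localization leaves the fibrations unchanged, so
\[
R_f(\AF) = R_f(\W) \cap R_f(\F) = R_f(\W) \cap \F.
\]
Since $\W \cap \F = \AF$, the new acyclic fibrations decompose as the old $\AF$ together with the arrows in $(R_f(\W) \setminus \W) \cap \F$. Accordingly, the plan is to (i) describe $R_f(\W)$ concretely on $[n]$, and then (ii) identify which of the newly-added weak equivalences are old fibrations.

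For step (i), I use that on a lattice $\W$ is decomposable and closed under composition, so on $[n]$ it induces a partition of $\{0, 1, \ldots, n\}$ into consecutive blocks, with $a \to b \in \W$ iff $a$ and $b$ lie in the same block. Let $B = [m, i]$ be the block containing $i$ and $B' = [i+1, j]$ the block containing $i+1$, where $j$ is as in the statement. I would then show $R_f(\W) = \W \cup \{a \to b : a \in B,\ b \in B'\}$. The inclusion $\supseteq$ is immediate by composing $a \to i \to i+1 \to b$ through $f$; conversely, one checks that the enlarged set is already closed under composition, two-out-of-three and decomposability, so no further merging can occur on the total order $[n]$.

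For step (ii), the key claim is that an arrow $a \to b$ with $a \in B$ and $i < b \leq j$ lies in $\F$ if and only if $a \to i \in \AF$. For $(\Rightarrow)$, any acyclic cofibration $c \to d$ testing $a \to i$ has $c \leq a$ and $d \leq i < b$, so the lift from $a \to b \in \F$ yields $d \leq a$; hence $a \to i \in \F$, and combined with $a \to i \in \W$ (same block $B$) this gives $a \to i \in \AF$. For $(\Leftarrow)$, given an acyclic cofibration $c \to d$ with $c \leq a$ and $d \leq b$, split cases: if $d \leq i$, a lift exists because $a \to i \in \F$; if $d > i$, then $d \in B'$, which forces $c \in B'$ (same block as $d$, since $c \to d \in \W$), contradicting $c \leq a \leq i$. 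Putting these together precisely yields the family of arrows $\{m' \to j' : m' \to i \in \AF,\ i < j' \leq j\}$ described in the statement, and the special case $m' = i$ recovers the remark that $i \to j'$ is always a new acyclic fibration. The main obstacle is the backward direction of the key claim: one must correctly split cases by the position of $d$ relative to $i$ and use the block structure sharply to exclude the $d > i$ case, which is exactly what pins down the upper bound $j$ on the second coordinate of the new arrows.
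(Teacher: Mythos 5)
The paper does not actually prove Proposition \ref{prop:totalordergoldenarrow}; it is quoted from \cite[Proposition 5.12]{BOOR23}, and the paper uses it only as motivation for the Golden Arrow construction (cf.\ the remark immediately afterwards that on $[n]$ the only ``truly new'' generator is $i \to j$). So there is no internal proof to compare against, but your blind argument is correct and self-contained, and it is a sensible direct proof of the cited result.

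Your route is: (1) on $[n]$, the weak equivalences partition $\{0,\dots,n\}$ into consecutive blocks, and $R_f(\W)$ is precisely the merger of the block $B=[m,i]$ with the block $B'=[i+1,j]$; (2) since $R_f(\F)=\F$, we have $R_f(\AF)=R_f(\W)\cap\F = \AF \cup \{a\to b : a\in B,\ b\in B',\ a\to b\in\F\}$; and (3) the lifting characterization shows that for $a\in B$ and $i<b\le j$, one has $a\to b\in\F$ iff $a\to i\in\AF$. All three steps check out. For step (1), you implicitly rely on the fact that on a total order every decomposable wide subcategory is the weak-equivalence class of some model structure (since every short edge has only trivial pullbacks and pushouts, the conditions of Theorem \ref{thm:legalw} are automatic), which makes the block merger terminal. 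For step (3), your forward direction can be streamlined by noting that $a\to i$ is the pullback of $a\to b$ along $i\to b$ and $\F$ is a transfer system, but your hands-on lifting argument is equally valid. The backward direction's case split on $d\le i$ versus $d>i$, using the block structure to rule out the second case, is exactly the right move and is what pins down the upper bound $j$.

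By contrast, the paper's own general machinery (Theorem \ref{thm:golden}) would produce $\Gamma_f=\{i\to j\}$ on $[n]$ (a single Golden Arrow), and then $\langle \AF\cup\{i\to j\}\rangle$ recovers the same answer after closing under pullbacks and composition. Your proof avoids invoking that machinery and works entirely with blocks and lifting squares, which is shorter for $[n]$ but of course does not generalize to wider lattices, where the set $\Gamma_f$ genuinely needs more than one arrow. One small presentational remark: when you assert that the merged set ``is already closed under composition, two-out-of-three and decomposability,'' it would be worth spelling out that it is additionally a valid weak-equivalence set in the sense of Theorem \ref{thm:legalw} (automatic here) and that $T_{\min}$ for the enlarged $\W$ is still contained in $\W'\cap\F$, so that the pair indeed assembles to a model structure; but on a total order these checks are immediate.
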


Upon closer inspection, we note that in this proposition the only truly ``new'' morphism added to the acyclic fibrations is $i \rightarrow j$, and all other new acyclic fibrations are the result of adding this arrow to the old acyclic fibrations and closing the set under transfer system operations. We depict this in Figure \ref{fig:total}.

\begin{figure}[H]
\hspace{-1.2cm}
\begin{tikzpicture}[scale=1.1]
\draw[thick,dotted] (-0.5,2) -- (0.25, 2);
\draw[red!30, fill, rounded corners] (0.6, 1.6) rectangle (5.4, 2.4); 
\draw[red!30, fill, rounded corners] (5.6, 1.6) rectangle (8.4, 2.4);
\draw[thick, dotted] (8.75,2) -- (9.5, 2);

 \foreach \x in {1,2,3,4,5,6,7,8}
  \foreach \y in {2,2}
    \fill (\x,\y) circle (1mm);
   \node at (5,2) (Sf) {}; 
    \node at (8,2) (Tf) {}; 
    \node at (5,1.3) {${i}$} ;
     \node at (6,1.3) {$i+1$} ;
    \node at (8,1.3) {$j$} ;
    
 \draw[line width=.5mm, black, -{stealth}, bend left] (Sf) edge node[above]{} (Tf);
\end{tikzpicture}
\caption{Right localization at the arrow $i \rightarrow j$ in $[n]$. The shaded areas depict the weak equivalence classes.}
\label{fig:total}
\end{figure}
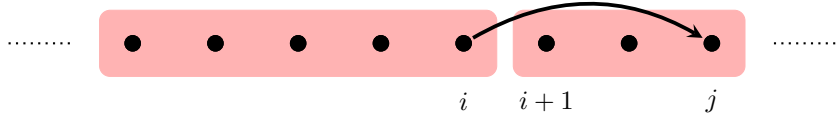

The main result of the next section will be generalizing this result to arbitrary finite lattices.

\section{An explicit characterization of Bousfield localization on finite lattices}\label{sec:rightloc}

We know that the data of a model structure is encoded entirely by its weak equivalences $\W$ together with its acyclic fibrations $\AF$, the latter forming a transfer system. 
Right Bousfield localization enlarges both $\W$ and $\AF$, and the goal of this section is to provide explicit constructions to make this precise, which we state in Theorem \ref{thm:golden}.
Moving forward, we will specifically focus on right localizing at \emph{short edges} as in the following definition.

 \begin{definition}
 We say that a morphism $f$ in a lattice is \emph{short} if it is indecomposable, i.e. if $f=f_2\circ f_1$ then either $f_2$ or $f_1$ is the identity. 
 \end{definition}

By the universal property of Bousfield localization, every Bousfield localization can be written as a sequence of localizations along short arrows. Therefore, it will suffice for our purpose to understand those. 

\begin{definition}\label{def:rubin}
    Given any collection of morphisms in a category, $M \subset \operatorname{Mor}(\C),$
    let the \emph{transfer system closure} of $M$, denoted $\left< M \right>$, be the smallest transfer system on $\C$ containing $M$.
\end{definition}

In general, $\left< M \right>$ is the smallest subcategory of $\C$ containing $M$ that is closed under pullbacks. In practice, the order of closing under composition and pullbacks can be fixed as in the construction below.

\begin{construction}
For poset categories $\C$ there is an explicit procedure for constructing $\langle M \rangle$, i.e. the smallest transfer system containing the set $M$, see \cite[Lemma 3.6]{MORSVZ}. We give the procedure below. The construction involves expanding the collection $M$ in two stages $M_1, M_2$ to build the smallest transfer system containing $M$.

\begin{itemize}
    \item Restriction: $M_{1}\coloneqq \{ K \wedge L \to L  \mid (K \to H) \in M, (L \to H) \in \C \}$,
    \item Composition: $M_{2}\colon= \{f_{1}\circ f_{2} \circ \ldots \circ f_{n} \mid f_{1}, \ldots, f_{n}\in M_1\}$.
\end{itemize}

The resulting collection $M_2$ is the transfer system closure $\langle M \rangle$ that we seek. In words, $M_1$ expands $M$ by addding all morphisms that are pullbacks of any morphisms in $M$ along any map in the category. Finally, $M_2$ expands $M_1$ by addding any morphisms that are compositions of finitely many morphisms in $M_1$.

\end{construction}

Now we describe the general construction for enlarging the acyclic fibrations of a model structure by a set $M$. The goal of this section is to produce a set $\Gamma_f$ such that the right localization $R_f(\W,\AF)$ of a model structure $(\W,\AF)$ is given by enlarging $\AF$ by $\Gamma_f$.

\begin{definition}\label{def:model}
    Given a model structure $(\W,\AF)$ on a lattice and a set of morphisms $M$, we define the \textit{model structure closure with respect to $M$} as 
    \[ M(\W,\AF) = (R_M(\W),\left<\AF \cup M \right>). \] 
\end{definition}

Proposition \ref{prop:M(W,AF)_is_a_model_str} will show that $M(\W,\AF)$ is indeed a model structure, which justifies the name.

\begin{proposition}\label{prop:M(W,AF)_is_a_model_str}
    Let $(\W,\AF)$ be a model structure on a lattice $P$, then $M(\W,\AF)$ is also a model structure on $P$.
\end{proposition}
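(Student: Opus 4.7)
The proof plan is to verify the model category axioms MC1--MC5 for the pair $(R_M(\W), \langle \AF \cup M \rangle)$, exploiting both the simplifications afforded by the lattice structure and Theorem \ref{thm:tmin} as an organizing tool. Since $P$ is a finite lattice, MC1 is immediate, and MC3 is vacuous as no nontrivial retracts exist in a poset. MC2 is built directly into the definition of $R_M(\W)$, which is constructed by iteratively closing $\W \cup M$ under two-out-of-three and transfer-system operations, so two-out-of-three holds by construction.

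Next, I would verify that $\langle \AF \cup M \rangle$ is a transfer system sitting inside $R_M(\W)$. The first assertion holds by the very definition of $\langle - \rangle$. For the containment $\langle \AF \cup M \rangle \subseteq R_M(\W)$, observe that $\AF \subseteq \W \subseteq R_M(\W)$ and $M \subseteq R_M(\W)$, and $R_M(\W)$ is closed under transfer-system operations; hence the smallest transfer system containing $\AF \cup M$ is also in $R_M(\W)$.

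The heart of the argument is to apply Theorem \ref{thm:tmin} to promote this pair to a bona fide model structure. The theorem reduces matters to showing that $\langle \AF \cup M \rangle$ contains the minimal transfer system $T_{min}$ associated to the weak equivalence class $R_M(\W)$. I plan to work through the explicit construction from Remark \ref{rem:tminconstruct}: write $K$ for the largest co-transfer system inside $R_M(\W)$ and $K_0$ for the largest co-transfer system inside $\W$, so that $T_{min}(\W) = K_0^\boxslash \cap \W \subseteq \AF$. The key claim is that $K = K_0$, expressing the general principle that right Bousfield localization preserves acyclic cofibrations. Granting this identification, $T_{min}(R_M(\W)) = K_0^\boxslash \cap R_M(\W)$, and I would argue that any arrow in $K_0^\boxslash \cap R_M(\W)$ outside the old $\AF$ must arise by closing $\AF \cup M$ under pullback and composition, hence lies in $\langle \AF \cup M \rangle$.

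The main obstacle will be establishing $K = K_0$. A priori, enlarging the weak equivalences to $R_M(\W)$ could in principle allow entirely new arrows into the maximal co-transfer system, and ruling this out is the delicate step. My plan is to proceed by induction on the iterative construction of $R_M(\W)$: at each step one either adds pullbacks/compositions of arrows already present (which cannot enlarge a \emph{co}-transfer system since co-transfer systems are closed under pushouts, not pullbacks), or one closes under two-out-of-three, which creates weak equivalences only as "completions" of composable pairs. In both cases one needs to check that no newly generated arrow can be added to $K_0$ while preserving the pushout-closure property definitive of a co-transfer system. Once this inductive stability is in hand, the rest of the verification falls out of Theorem \ref{thm:tmin}.
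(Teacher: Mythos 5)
Your broad strategy --- reduce, via Theorem~\ref{thm:tmin}, to showing that the minimal transfer system $T'_{\min}$ for $R_M(\W)$ is contained in $\left<\AF \cup M\right>$ --- is the same one the paper uses. However, the step you flag as ``the heart of the argument,'' namely that the largest co-transfer system $K$ in $R_M(\W)$ equals the largest co-transfer system $K_0$ in $\W$, is false. Take $P=[1]\times[1]$ with the trivial model structure ($\W = \AF = \AC = \{\id\}$), and $M=\{(1,0)\to(1,1)\}$. Then $R_M(\W) = \{\id,\,(1,0)\to(1,1),\,(0,0)\to(0,1)\}$ (one must include the pullback $(0,0)\to(0,1)$). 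This set is itself a co-transfer system (the pushout of $(0,0)\to(0,1)$ along $(0,0)\to(1,0)$ is $(1,0)\to(1,1)$, which is present), so $K = R_M(\W)$, while $K_0 = \{\id\}$. Thus $K \supsetneq K_0$, and the formula you derive, $T'_{\min} = K_0^\boxslash \cap R_M(\W)$, would wrongly give $T'_{\min} = R_M(\W)$, whereas the correct value is $T'_{\min} = \{\id\}$. The slip comes from conflating $K_0$ with $\AC$: right Bousfield localization does preserve the acyclic cofibrations $\AC$ of the \emph{given} model structure $(\W,\AF)$, but $K_0$ is the acyclic cofibrations of the extremal model structure $(\W, T_{\min})$, which is not the one being localized. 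Enlarging $\W$ genuinely can create a strictly larger maximal co-transfer system, as this example shows.

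For what it is worth, the paper's own proof takes a different route at this point: instead of comparing $K$ and $K_0$, it directly asserts a monotonicity statement --- that enlarging the weak equivalence set shrinks $T_{\min}$ --- and then chains $T'_{\min}\subseteq T_{\min}\subseteq \AF\subseteq\left<\AF\cup M\right>$. Note, though, that this monotonicity is also not automatic from the formula $T_{\min}=K^\boxslash\cap\W$, since the factor $\W$ on the right \emph{grows} while $K^\boxslash$ shrinks; in fact for the trivial model structure on $[1]\times[1]$ with $M=\{(0,0)\to(1,0)\}$, one gets $T'_{\min}=\{\id,\,(0,0)\to(1,0)\}\not\subseteq\{\id\}=T_{\min}$, yet the proposition's conclusion still holds because $\left<\AF\cup M\right>$ is large enough. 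So any correct argument must engage directly with $\left<\AF\cup M\right>$ rather than pass through either $K = K_0$ or $T'_{\min}\subseteq T_{\min}$. A more robust line of attack: split $T'_{\min}$ into $T'_{\min}\cap\W$ (which \emph{is} contained in $T_{\min}$, since $K_0\subseteq K$ gives $K^\boxslash\subseteq K_0^\boxslash$) and $T'_{\min}\setminus\W$, and then show the latter consists of fibrations in $R_M(\W)\setminus\W$ that are pullbacks and composites of arrows in $\AF\cup M$.
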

\begin{proof}
First, we observe that by definition, $\left<\AF \cup M \right> \subseteq R_M(\W)$.
As $(\W, \AF)$ is assumed to be a model structure, we know that $\AF$ is a transfer system with 
\[
T_{min} \subseteq \AF \subseteq \W,
\]
see Theorem \ref{thm:tmin}. We therefore have to show that $T'_{min} \subseteq \left<\AF \cup M \right>$, where $T'_{min}$ is the minimal transfer system associated with $\W_M$. 
When forming $M(\W,\AF)=(\W_M,\left<\AF \cup M \right>)$, the size of the weak equivalence set increases, therefore by the construction given in Remark \ref{rem:tminconstruct}, the size of the minimal transfer system decreases.
 Since $\left<\AF \cup M \right> \supseteq \AF,$ we know that 
    $${T'}_{min} \subset {T}_{min} \subset \AF \subset \left<\AF \cup M \right>.$$
\end{proof}

In order to understand $M(\W,\AF)=(R_M(\W), \left<AF\cup M \right>)$,
let us now describe $R_M(\W)$ after performing a right localization of a model structure $(\W,\AF)$ at the set $M$. 
Firstly, note that in any model structure, short edges have to satisfy the following properties in order for the factorization axiom MC5 to hold.
\begin{itemize}
\item Every short edge that is \emph{not} a weak equivalence has to be a fibration \emph{and} a cofibration.
\item Every short edge that is a weak equivalence has to be a fibration \emph{or} a cofibration, but never both.
\end{itemize}
In addition to this, we recall that (acyclic) fibrations are closed under pullbacks, and (acyclic) cofibrations are closed under pushouts. 

Using this information, we see that in a model category all short edges in $\W$ either must have all their pushouts or all their pullbacks contained in $\W$ as well.
This is not sufficient to characterize whether a decomposable subcategory  arises as the weak equivalences of a model structure on a finite lattice - one has to add a further condition.
The following is \cite[Theorem 5.9]{MORSVZ2}

\begin{theorem}[MORSVZ2]\label{thm:legalw}
Let $\W$ be a decomposable subcategory of a finite lattice $P$. Then there is a model structure on $P$ with weak equivalences $\W$ if and only if the following two points are satisfied.
\begin{itemize}
\item For a short edge $\sigma \in \W$, either all pullbacks of $\sigma$ are in $W$ or all pushouts of $\sigma$ are in $\W$. 
\item For any factorization $f=\sigma_n \circ \sigma_{n-1} \circ \cdots \circ \sigma_1$ of a morphisms $f \in \W$ into short arrows, there is $k$ such that 
\begin{itemize}
\item for $\sigma_i$ with $i \leq k$, all pushouts of $\sigma_i$ are in $\W$,
\item for $\sigma_i$ with $i >k$, all pullbacks of $\sigma_i$ are in $\W$.
\end{itemize} 
\end{itemize}
\end{theorem}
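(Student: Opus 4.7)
The plan is to prove the two directions of the biconditional separately, with the main technical work on the sufficiency side.

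For the \emph{only if} direction, assume $(\W,\AF)$ is a model structure on $P$. For any short $\sigma\in\W$, MC5 provides a factorization $\sigma=p\circ i$ with $i\in\AC$ and $p\in\AF$; since $\sigma$ is indecomposable one of the factors must be the identity, so $\sigma\in\AF\cup\AC$. Since $\AF$ is closed under pullback and $\AC$ under pushout, both contained in $\W$, the first condition follows. For the second condition, decomposability of $\W$ forces each $\sigma_i$ into $\W$ and hence into $\AF\cup\AC$. I would take $k$ to be the largest index with $\sigma_k\in\AC$; then every $\sigma_j$ with $j>k$ lies in $\AF$, so its pullbacks lie in $\W$. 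The delicate point is verifying that every $\sigma_i$ with $i\le k$ is pushout-type. I would argue by contradiction: apply MC5 to the sub-composition $\sigma_k\circ\cdots\circ\sigma_i$ to produce an intermediate $z$ with $x_{i-1}\to z\in\AC$ and $z\to x_k\in\AF$, and use indecomposability of $\sigma_i$ together with the lattice order to force $z\in\{x_{i-1},x_i\}$, either case contradicting the assumption that a pushout of $\sigma_i$ escapes $\W$.

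For the \emph{if} direction, I would reverse-engineer the model structure from $\W$. Define
\begin{align*}
\AF_0 &= \{\sigma \text{ short in } \W \mid \text{all pullbacks of } \sigma \text{ lie in } \W\},\\
\AC_0 &= \{\sigma \text{ short in } \W \mid \text{all pushouts of } \sigma \text{ lie in } \W\},
\end{align*}
and set $\AF:=\langle\AF_0\rangle$, $\AC$ to be the dual co-transfer-system closure of $\AC_0$, $\Cof:={}^{\boxslash}\AF$, and $\F:=\AC^{\boxslash}$. Axioms MC1 and MC3 are automatic on a finite lattice (finite (co)limits exist, no nontrivial retracts), MC2 is the decomposability of $\W$ combined with the two-out-of-three property, and MC4 is built into the definitions of $\Cof$ and $\F$.

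The main obstacle will be MC5. For $f\in\W$ I would apply the second hypothesis to any short factorization $f=\sigma_n\circ\cdots\circ\sigma_1$ to obtain the cutoff $k$ and set
\[
f=(\sigma_n\circ\cdots\circ\sigma_{k+1})\circ(\sigma_k\circ\cdots\circ\sigma_1),
\]
where the first composite is an acyclic cofibration (composition of $\AC_0$-arrows) and the second an acyclic fibration (composition of $\AF_0$-arrows). For $f\notin\W$ I would concatenate this construction with the data on short arrows that fail to lie in $\W$, which must be declared both cofibrations and fibrations in order for MC5 to hold at all, and then sort the resulting short factors into an initial $\Cof$-segment followed by a terminal $\AF$-segment. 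The hardest step will be simultaneously producing both the $\Cof$-$\AF$ and $\AC$-$\F$ factorizations while verifying that $\AF=\W\cap\F$ and $\AC=\W\cap\Cof$ are recovered; this is where both hypotheses on $\W$ must be leveraged together, and where careful comparison against the minimal transfer system $T_{\min}$ of Theorem \ref{thm:tmin} is likely needed to ensure the lifting properties genuinely match.
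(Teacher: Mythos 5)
The paper does not prove this statement: it is quoted verbatim as \cite[Theorem 5.9]{MORSVZ2}, so there is no in-paper proof against which to compare. Evaluating your sketch on its own merits, there is a genuine gap in the \emph{only if} direction. You choose $k$ to be the largest index with $\sigma_k\in\AC$ and, to show every $\sigma_i$ with $i\le k$ is pushout-type, apply MC5 to $\sigma_k\circ\cdots\circ\sigma_i$ and assert that the resulting intermediate object $z$ must lie in $\{x_{i-1},x_i\}$. That assertion is unjustified: MC5 produces \emph{some} $z$ with $x_{i-1}\le z\le x_k$, but in a lattice that is not a total order there is no reason whatsoever for $z$ to lie on the given chain $x_{i-1}\le x_i\le\cdots\le x_k$. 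In fact even granting $z\in\{x_{i-1},x_i\}$, the case $z=x_{i-1}$ gives $x_{i-1}\to x_k\in\AF$, from which one only extracts $\sigma_i\in\AF$ by pullback closure, and that says nothing about the pushouts of $\sigma_i$ and hence does not contradict your hypothesis. You are also conflating ``all pushouts of $\sigma_i$ lie in $\W$'' with ``$\sigma_i\in\AC$'': the former is strictly weaker, and what must be ruled out is precisely an exclusively-pullback-type arrow (one with a pushout escaping $\W$) sitting \emph{before} an exclusively-pushout-type arrow in the chain, which is a more delicate statement about how $\AF$- and $\AC$-coloured shorts can interleave than ``take the last $\AC$ index.''

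The \emph{if} direction is also left essentially open. The construction $\AF:=\langle\AF_0\rangle$, $\AC:=$ co-transfer closure of $\AC_0$, $\Cof:={}^{\boxslash}\AF$, $\F:=\AC^{\boxslash}$ is a reasonable starting point, but you never establish the compatibility conditions $\AF=\W\cap\F$ and $\AC=\W\cap\Cof$ (these are far from automatic once $\AF$ and $\F$ are defined independently), nor that $\langle\AF_0\rangle\subseteq\W$ in the first place (a composition of pullbacks of $\AF_0$-arrows could a priori leave $\W$ unless one invokes both hypotheses), nor the factorization MC5 for morphisms outside $\W$. Acknowledging these as ``the hardest step'' is honest, but it means the direction is not proved. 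A self-contained argument would likely need to isolate and prove the interleaving claim as a lemma and build the weak factorization systems explicitly from it, rather than hoping the lifting definitions line up after the fact.
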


Returning to right Bousfield localization, we know that the right localization $R_f(\W,\AF)$ of a model structure $(\W,\AF)$ at a short edge $f$ is characterized by the following.
\begin{itemize}
\item The weak equivalences $R_f(\W)$ contain $\W$ and all pullbacks of $f$. 
\item The weak equivalences $R_f(\W)$ satisfy the conditions of Theorem \ref{thm:legalw}.
\item $R_f(\W)$ is the smallest decomposable subcategory satisfying the previous two points. 
\end{itemize}

To see what arrows we have to add to $\W$ to obtain $R_f(\W)$, we need to understand the difference between a model structure and a pre-model structure, which is simply a model structure without the two-out-of-three axiom MC2, see e.g. \cite[4.7 and 4.8]{BOOR23} and \cite[Remark 4.6]{FOOQW}. 
The data of a pre-model structure is equivalent to two pairs
\[
({}^\boxslash T_1, T_1), ({}^\boxslash T_2, T_2)
\]
where $T_1 \subseteq T_2$ are transfer systems and ${}^\boxslash T_i$ is the set of maps that have the left lifting property with respect to $T_i$. A pair $({}^\boxslash T, T)$ with $T$ a transfer system is also known in the literature as a \emph{weak factorization system}. In the above, the transfer system $T_2$ plays the role of the fibrations, $T_1$ of the acyclic fibrations, ${}^\boxslash T_2$ the acyclic cofibrations and ${}^\boxslash T_1$ the cofibrations. The weak equivalences are defined as the set $T_1\circ {}^\boxslash T_2 $, and our pre-model structure is furthermore a model structure if and only if $T_1\circ {}^\boxslash T_2 $ satisfies the two-out-of-three axiom.

Given a model structure with acyclic fibrations $\AF$ and acyclic cofibrations $\AC$, every transfer system $T \supseteq \AF$ gives rise to a {pre-model structure}  with acyclic fibrations $T$, acyclic cofibrations $\AC$ and $W':=T \circ \AC$.  
The obstacle for this pre-model structure to actually be a model structure is that $\W'=T \circ \AC$ does not necessarily satisfy the two-out-of-three axiom.

For our right localization, this means the following. We are adding an arrow $f$ to the acyclic fibrations, therefore the new acyclic fibrations after right localization contain the transfer system $T:=\left< \AF \cup \{f\}\right>$. If for this choice of $T$, $W'=T \circ \AC$ satisfies the two-out-of-three axiom, then $W'=R_f(\W)$, and we have found our new weak equivalences. If not, then we have to add to $T$. Before we make this precise in Construction \ref{con:rf}, we illustrate the process with the following example.

\begin{example}\label{ex:goodexample}

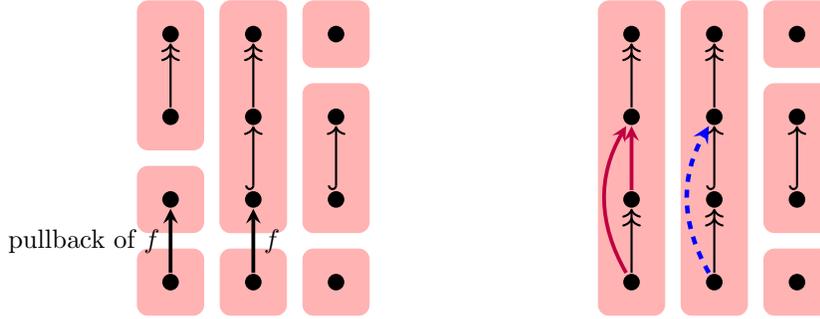
\begin{figure}[H]
    \begin{tikzpicture}[scale=1.1, hookarrow/.style={{Hooks[right]}->}]
    \draw[red!30, fill, rounded corners] (0.6,2.6) rectangle (1.4,4.4); 
  \draw[red!30, fill, rounded corners] (1.6,1.6) rectangle (2.4,4.4); 
    \draw[red!30, fill, rounded corners] (2.6,1.6) rectangle (3.4,3.4); 
    \draw[red!30, fill, rounded corners] (1.6,1.6) rectangle (2.4,4.4);
    \draw[red!30, fill, rounded corners] (0.6,0.6) rectangle (1.4,1.4); 
    \draw[red!30, fill, rounded corners] (1.6,1.4) rectangle (2.4,0.6); 
    \draw[red!30, fill, rounded corners] (2.6,1.4) rectangle (3.4,0.6);
    \draw[red!30, fill, rounded corners] (0.6,1.6) rectangle (1.4,2.4); 
    \draw[red!30, fill, rounded corners] (2.6,3.6) rectangle (3.4,4.4);  
    \foreach \x in {1,2,3}
    \foreach \y in {1,2,3,4}
    	\fill (\x,\y) circle (1mm);
    \node at (2,1) (Sf) {}; 
    \node at (2,2) (Tf) {}; 
    \node at (1,1) (Sfp) {}; 
    \node at (1,2) (Tfp) {}; 
    \node at (3,1) (A) {}; 
    \node at (3,2) (B) {}; 
      \node at (3,3) (K) {}; 
    \node at (4,1) (C) {}; 
    \node at (4,2) (D) {}; 
     \node at (1,3) (E) {}; 
    \node at (2,3) (F) {}; 
    \node at (1,4) (G) {}; 
    \node at (2,4) (H) {}; 
    \draw[line width=.5mm, black, -{stealth}] (Sf) edge node[right]{\color{black}$f$} (Tf);    
    \draw[line width=.5mm,  black, -{stealth}] (Sfp) edge node[left]{\color{black} pullback of $f$} (Tfp);
    \draw[thick, ->>] (E) edge node[above]{} (G);
     \draw[thick, ->>] (F) edge node[above]{} (H);
     \draw[thick, hookarrow](Tf) edge node[above]{} (F);
     \draw[thick, hookarrow](B) edge node[above]{} (K);
    \end{tikzpicture}
    \hspace{2cm}
   \begin{tikzpicture}[scale=1.1, hookarrow/.style={{Hooks[right]}->}]
    \draw[red!30, fill, rounded corners] (0.6,0.6) rectangle (1.4,4.4); 
    \draw[red!30, fill, rounded corners] (2.6,1.6) rectangle (3.4,3.4); 
    \draw[red!30, fill, rounded corners] (1.6,0.6) rectangle (2.4,4.4); 
    \draw[red!30, fill, rounded corners] (2.6,1.4) rectangle (3.4,0.6);
    \draw[red!30, fill, rounded corners] (2.6,3.6) rectangle (3.4,4.4);  
    \foreach \x in {1,2,3}
    \foreach \y in {1,2,3,4}
    	\fill (\x,\y) circle (1mm);
    \node at (2,1) (Sf) {}; 
    \node at (2,2) (Tf) {}; 
    \node at (1,1) (Sfp) {}; 
    \node at (1,2) (Tfp) {}; 
    \node at (3,1) (A) {}; 
    \node at (3,2) (B) {}; 
      \node at (3,3) (K) {}; 
    \node at (4,1) (C) {}; 
    \node at (4,2) (D) {}; 
     \node at (1,3) (E) {}; 
    \node at (2,3) (F) {}; 
    \node at (1,4) (G) {}; 
    \node at (2,4) (H) {}; 
    \draw[thick, ->>] (Sf) edge node[above]{} (Tf);
    \draw[thick, ->>] (Sfp) edge node[above]{} (Tfp);
    \draw[thick, ->>] (E) edge node[above]{} (G);
     \draw[thick, ->>] (F) edge node[above]{} (H);
     \draw[thick, hookarrow](Tf) edge node[above]{} (F);
     \draw[thick, hookarrow](B) edge node[above]{} (K);
     
     \draw[line width=.6mm, blue, -{stealth}, dashed] (Sf) edge[bend left] node[above]{} (F);
    \draw[line width=.5mm, purple, -{stealth}] (Tfp) edge node[left]{\color{black} } (E);
      \draw[line width=.5mm, purple, -{stealth}] (Sfp) edge[bend left] node[above]{} (E);
    \end{tikzpicture}
    \caption{Example of a right localization at the morphisms $f$. The shaded regions denote the respective weak equivalence sets.}
    \label{fig:goodpicture}
    \end{figure}

In Figure \ref{fig:goodpicture}, the first image denotes a model structure on the lattice $[2] \times [3]$, where the weak equivalences $\W$ are indicated by the shaded red region, and we would like to right localize at the short arrow $f$. We see that while all short arrows in $\W':=\W \cup \left<f\right>$ either have all their pushouts or pullbacks contained in $\W'$, this $\W'$ does not satisfy the second point of Theorem \ref{thm:legalw}.

For example, in our picture the arrows $(1,0) \rightarrow (1,1)$ and $(1,1) \rightarrow (1,2)$ are both in $T\circ \AC$ for $T=\left<\AF \cup \{f\}\right>$ as they are either in $T$ or $\AC$ themselves, but their composite $(1,0) \rightarrow (1,2)$ (the dashed blue arrow) is not in $T \circ \AC$. Therefore, $T \circ \AC$ is not closed under the two-out-of-three property.

To rectify this failure, we need to close $\left<\AF \cup \{f\}\right> \circ \AC$ under the two-out-of-three property. 
In our picture, this amounts to adding the arrow $(1,0) \rightarrow (1,2)$ to the new acyclic fibrations and therefore to $\W'$. 
As a consequence, we also have to add the pullbacks of any new arrows we added to $\W'$, and again any arrows that arise from applying the two-out-of-three closure to the result. 

In our second picture, those latter arrows are depicted by the purple arrows $(0,1) \rightarrow (0,2)$ and $(0,0) \rightarrow (0,2)$. We now see that all colored arrows in the second picture together with the original $\W$ form a decomposable subcategory containing $f$ and its pullbacks as well as satisfying Theorem \ref{thm:legalw}. As it is also the smallest such subcategory by construction, we see that we have indeed constructed $R_f(\W)$.

\end{example}

In general, we have the following process.

\begin{construction}\label{con:rf}
In order to obtain the right localization $R_f(\W)$ of the weak equivalence set $\W$, we perform these recursive steps.
\begin{enumerate}
\item $\AF_{(0)} := \AF, \,\,\, S_{(0)}:=\{f\}$,
\item $\AF_{(n)} := \left< \AF_{(n-1)} \cup S_{(n-1)} \right>$,
\item Take $\W_{(n)}$ to be the closure of $\AF_{(n)} \circ \AC$ under the two-out-of-three property.
\item Set $S_{(n)}$ to be the set of arrows in $\W_{(n)} \backslash \W_{(n-1)}$ and continue with Step (2).
\end{enumerate}
We know the process terminates or stabilizes, i.e. there is an $N$ with $W_{N}=W_{N+k}$ for all $k$ as we are in a finite lattice, and so we obtain a $\W_{(N)}$ satisfying Theorem \ref{thm:legalw}, and thus, $\W_{(N)}=R_f(\W)$. 
\end{construction} 
\vspace{-.4cm}
While this does not give a self-contained closed formula for $R_f(\W)$ involving $f$ and $\W$ only, it gives a straightforward and reliable procedure for obtaining the correct weak equivalences in examples. Note that a dual procedure exists for left localization.

For the rest of the section we will fix a model structure $(\W,\AF)$ on a lattice $P$, and a morphism $f$ whose source and target objects are denoted by $s(f)$, and $t(f)$, respectively. 
The goal is to define a set of {maximal arrows} $\Gamma_f$ called \emph{Golden Arrows} depending on $f$ and $\W$, and we start by defining the targets of said arrows.

Denote the weak equivalence class of an object $x$ in a model structure with weak equivalences $\W$ by $[x]_\W$.
Let $\sigma: s(\sigma) \rightarrow t(\sigma)$ be any morphisms in our lattice.
Define the set $T(\sigma)$ to be the maximal objects in the weak equivalence class of $t(\sigma)$, namely
\begin{eqnarray}
T(\sigma) & = & \{  y \in [t(\sigma)]_{\W} \,\vert\, \text{ there is no $z \in [t(\sigma)]_{\W}$ with $z >y$} \nonumber \} \\
 & = & \{  y \in [t(\sigma)]_{\W} \,\vert\, \text{ if $z \in [t(\sigma)]_{\W}$ and $y \leq z$ then $y=z$} \nonumber \}
\end{eqnarray}

 For the set $S(\sigma)$ take all $y \in [s(\sigma)]_{\W}$ such that $y$ is maximal among the objects in its weak-equivalence class before localization, and such that there is an arrow from $y$ to some element in $T(\sigma)$. Formally,
 \[
S(\sigma)= \{  y \in [t(\sigma)]_{\W} \,\vert\, y \leq t \text{ for some $t \in T(\sigma)$, and if $z \in [y]_{\W}$ and $y \leq z \leq t$ then $y=z$}\} 
\]

With this having been defined, consider the sets $S(\sigma)$ and $T(\sigma)$ for every short arrow $\sigma$ in $R_f(\W) \setminus \W$, i.e. short arrows that become weak equivalences after localizing but are not in $\W$ before localizing. As short arrows that are not weak equivalences are always fibrations, due to axiom (MC5), these arrows will need to be added to the set of new acyclic fibrations.
We now define the following set of maximal such arrows. 
\begin{definition}\label{def:goldenarrows}
    The \textit{Golden Arrows associated to the morphism $f$} are defined as the set 
    \[
\Gamma_f = \{    s \rightarrow t \, \vert \, s \in S(\sigma) , t \in T(\sigma) \,\,\mbox{for some short arrow $\sigma \in R_f(\W)\setminus \W$}  \}. 
\]
\end{definition} 

An example of the sources of $\Gamma_f$ is shown in Figure \ref{fig:starshaped}.

\begin{remark}\label{rem:goldensquare}
In particular, the set $\Gamma_f$ is constructed so that for every new weak equivalence $w \in R_f(\W) \backslash \W$ there is an element $g \in \Gamma_f$ such that $g \ge w$, i.e. for every new weak equivalence $w$ there is a golden arrow $g$ such that the following square commutes.

\[
\xymatrix{ s(g) \ar[r]^g & t(g) \\
s(w) \ar[u] \ar[r]^w & t(w) \ar[u]
}
\]
Furthermore, $g$ can be chosen such that the left vertical map is in $\W$.
\end{remark}

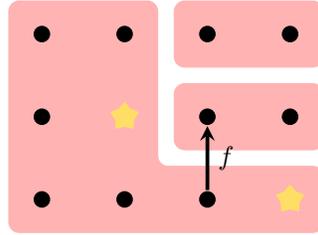
\begin{figure}[H]
    \begin{tikzpicture}[scale=1.1]
    \draw[red!30, fill, rounded corners] (2.6,2.6) rectangle (4.4,3.4); 
    \draw[red!30, fill, rounded corners] (2.6,1.6) rectangle (4.4,2.4); 
    \draw[red!30, fill, rounded corners] 
    (0.6, 0.6) -- 
    (4.4, 0.6) -- 
    (4.4, 1.4) -- 
    (2.4, 1.4) -- 
    (2.4, 3.4) -- 
    (0.6, 3.4) -- 
    cycle;
    \foreach \x in {1,2,3,4}
    \foreach \y in {1,2,3}
    	\fill (\x,\y) circle (1mm);
    \node[star, star points=5, minimum size=4mm, inner sep=2pt, fill=gold] at (4,1) {};
    \node[star, star points=5, minimum size=4mm, inner sep=2pt, fill=gold] at (2,2) {};
    \node at (3,1) (Sf) {}; 
    \node at (3,2) (Tf) {}; 
    \draw[line width=.5mm, black, -{stealth}] (Sf) edge node[right]{$f$} (Tf);
    \end{tikzpicture}
    \caption{The star-shaped nodes are the elements of $S(f)$ when right localizing along $f$, while the shaded regions depict the weak equivalences before localization.}
    \label{fig:starshaped}
    \end{figure}

\begin{example}
    Suppose $P$ is a $5 \times 5$ lattice  with the three weak equivalence classes and chosen morphism $f$ as shown in the left diagram below. Then the class $[t(f)]_{\W}$ is the upper right block. After localizing along $f$ to form $R_f(\W)$, there remains only one equivalence class of weak equivalences, as shown in the right diagram below. The object in the far upper right is the only object in $T(f)$, 
\end{example}
\begin{figure}[H]
    \begin{tikzpicture}[scale=1.1]
    \draw[red!30, fill, rounded corners] (3.6,2.6) rectangle (5.4,5.4);
    \draw[red!30, fill, rounded corners] (0.6,3.6) rectangle (3.4,5.4);
    \draw[red!30, fill, rounded corners] 
    (0.6, 0.6) -- 
    (5.4, 0.6) -- 
    (5.4, 2.4) -- 
    (3.4, 2.4) -- 
    (3.4, 3.4) -- 
    (0.6, 3.4) -- 
    cycle;
    \foreach \x in {1,2,3,4,5}
    \foreach \y in {1,2,3,4,5}
    	\fill (\x,\y) circle (1mm);
    \node at (3,3) (Sf) {}; 
    \node at (4,3) (Tf) {}; 
    \node at (5,5) (Tg) {};
    \node at (3,5) (Sg1) {};
    \node at (5,2) (Sg3) {};
    \draw[line width=.5mm, black, -{stealth}] (Sf) edge[bend left] node[above]{$f$} (Tf);
    \draw[line width=2.5pt, gold, -{stealth}, dashed] (Sg1) edge[bend left] node[above]{\color{black}$g_1$} (Tg);
    \draw[line width=2.5pt, gold, -{stealth}, dashed] (Sf) edge[bend left] node[left]{\color{black}$g_2$} (Tg);
    \draw[line width=2.5pt, gold, -{stealth}, dashed] (Sg3) edge[bend right] node[right]{\color{black}$g_3$} (Tg);
    \end{tikzpicture}
    \hspace{5em}
    \begin{tikzpicture}[scale=1.1]
    \draw[red!30, fill, rounded corners] (0.6, 0.6) rectangle (5.4,5.4);
    \foreach \x in {1,2,3}
    \foreach \y in {1,2,3,4,5}
    	\draw[very thick] (\x,\y) circle (1.7mm);
    \foreach \x in {4,5}
    \foreach \y in {1,2}
    	\draw[very thick] (\x,\y) circle (1.7mm);
    \foreach \x in {1,2,3,4,5}
    \foreach \y in {1,2,3,4,5}
    	\fill (\x,\y) circle (1mm);
    \node at (3,3) (S) {};
    \node at (4,3) (T) {};
    \draw[line width=.5mm, black, -{stealth}] (S) edge[bend left] node[above]{$f$} (T);
    \end{tikzpicture}
\end{figure}

The following example shows that is necessary to add Golden Arrows for all the pairs of connected components that are joined under right localization, not just for the source and target components of $f$.
\begin{example} 
Consider a model structure on $[2] \times [3]$ with weak equivalences as in Example
\ref{ex:goodexample},
and we right localize at the arrow $f:(1,0) \rightarrow (1,1)$. Definition \ref{def:goldenarrows} gives us
\[
\Gamma_f=\{(1,0)\rightarrow (1,3), \,(0,0) \rightarrow (0,1),\,(0,1) \rightarrow (0,3)\}.
\]
The part of $\Gamma_f$ arising from joining the source and target components of $f$ is $(1,0)\rightarrow (1,3)$. The second element of $\Gamma_f$, $(0,0) \rightarrow (0,1)$, is a pullback of $(1,0)\rightarrow (1,3)$ and therefore redundant. However, we do require $(0,1) \rightarrow (0,3)$: Example \ref{ex:goodexample} shows that $(0,1)\rightarrow (0,2)$ becomes an acylic fibration after right localization at $f$, but $(0,1) \rightarrow (0,2) \notin \left<\AF \cup\{(1,0)\rightarrow (1,3)\}\right>$.

\end{example}

\begin{example}\label{ex:squareexample}

Consider the following model structure on the lattice $[1] \times [1]$, where the weak equivalences are given by the shaded area and every morphisms is a fibration. 
This is the only possible model structure on $[1] \times [1]$ with this weak equivalence set, so
\[
\AF = \{ (0,0) \rightarrow (1,0), \,\, (0,0) \rightarrow (0,1) \},
\]
i.e. the transfer system of acyclic fibrations before localization consists of the bottom left corner of the square, depicted below: 
\begin{figure}[H]
\hspace{-1.2cm}
\begin{tikzpicture}[scale=1.1]
\draw[red!30, fill, rounded corners] (0.6, 0.6) rectangle (2.4, 1.4); 
\draw[red!30, fill, rounded corners] (1.6, 1.6) rectangle (2.4, 2.4); 
\draw[red!30, fill, rounded corners]  (0.6, 0.6)rectangle (1.4,2.4); 
 \foreach \x in {1,2}
 \foreach \y in {1,2}
    \fill (\x,\y) circle (1mm);
    \node at (1,2) (Sf) {}; 
    \node at (2,2) (Tf) {}; 
    \node at (2,1) (B) {};
    \node at (1,1) (C) {};
  \draw[line width=.5mm, black, -{stealth}] (Sf) edge node[above]{$f$} (Tf);
   \draw[thick, ->>, black] (C) edge (Sf);
    \draw[thick, ->>, black] (C) edge (B);
  
\end{tikzpicture}
\end{figure}
We look at what what happens to $\AF$ when localizing at $f$. 
We know that after right localization at this $f$, every morphism in $[1]\times [1]$ becomes a weak equivalence.
The Golden Arrows are denoted by $g_1$ and $g_2$ in the picture below.
\begin{figure}[H]
\begin{tikzpicture}[scale=1.1]
\draw[red!30, fill, rounded corners] (0.6, 0.6) rectangle (2.4, 1.4); 
\draw[red!30, fill, rounded corners] (1.6, 1.6) rectangle (2.4, 2.4); 
\draw[red!30, fill, rounded corners]  (0.6, 0.6)rectangle (1.4,2.4); 
 \foreach \x in {1,2}
 \foreach \y in {1,2}
    \fill (\x,\y) circle (1mm);
    \node at (1,2) (Sf) {}; 
    \node at (2,2) (Tf) {}; 
    \node at (2,1) (Tp) {};
      \node at (2,1) (B) {};
    \node at (1,1) (C) {};
   \draw[thick, ->>, black] (C) edge (Sf);
    \draw[thick, ->>, black] (C) edge (B);
    \draw[line width=.5mm, black, -{stealth}] (Sf) edge node[below]{$f$} (Tf);
    \draw[line width=2.5pt, gold, -{stealth}, dashed] (Sf) edge[bend left] node[above]{\color{black}$g_1$} (Tf);
    \draw[line width=2.5pt, gold, -{stealth},dashed] (Tp) edge[bend right] node[right]{\color{black}$g_2$} (Tf);
\end{tikzpicture}
\end{figure}
These Golden Arrows get incorporated into the set of acyclic fibrations in the new model structure, and since the acyclic fibrations need to form a transfer system we include the necessary arrows making this true.
Therefore, for this choice of $f$, we have
\[
R_f(\W,\AF)=(\mbox{all},\mbox{all}).
\]
\begin{figure}[H]
\hspace{-0.4cm}
\begin{tikzpicture}[scale=1.1]
 \draw[red!30, fill, rounded corners] (0.6, 0.6) rectangle (2.4,2.4);  
 \foreach \x in {1,2}
 \foreach \y in {1,2}
    \fill (\x,\y) circle (1mm);
    \node at (1,2) (Sf) {}; 
    \node at (2,2) (Tf) {}; 
    \node at (1,1) (Sp) {};
    \node at (2,1) (Tp) {};
    \draw[ thick, ->>] (Sf) edge node[above]{$f$} (Tf);
     \draw[thick, ->>] (Sp) edge node[above]{} (Tp);
     \draw[thick, ->>] (Sp) edge node[above]{} (Sf);
     \draw[thick, ->>] (Tp) edge node[above]{} (Tf);
     \draw[thick, ->>] (Tp) edge node[above]{} (Tf);
    \draw[thick, ->>] (Sp) edge node[above]{} (Tf);
\end{tikzpicture}
\end{figure}
\end{example}

Recall from Definition \ref{def:model} that for a set of morphisms $M$, $M(\W,\AF)=(R_M(\W),\left<\AF \cup M \right>)$ is the model structure obtained from adding $M$ to the set of acyclic fibrations, and that $\left< \AF\cup M \right>$ is the smallest transfer system containing $\AF$ as well as $M$. 

\begin{lemma}\label{lem:subset}
Let $(\W,\AF)$ be a model structure with fibrations $f$, and let $R_f(\W,\AF)=(R_f(\W), R_f(\AF))$ denote the right Bousfield localization of $(\W,\AF)$ at $f$. 
Then
    \[
    \left< \AF\cup \Gamma_f  \right> \subseteq \F \cap R_f(\W) = R_f(\AF).
    \] 
    In particular, the Golden Arrows $\Gamma_f$ are acyclic fibrations with respect to the right localized model structure at $f$.
\end{lemma}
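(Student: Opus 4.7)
The plan is to reduce the desired inclusion $\langle\AF\cup\Gamma_f\rangle\subseteq R_f(\AF)$ to checking the set-level containment $\AF\cup\Gamma_f\subseteq R_f(\AF)$, after first identifying $R_f(\AF)$ with $\F\cap R_f(\W)$. For the identification, right Bousfield localization leaves fibrations unchanged, so
\[
R_f(\AF)=R_f(\F)\cap R_f(\W)=\F\cap R_f(\W),
\]
which gives the equality in the statement. Because $R_f(\AF)$ is the acyclic fibration set of the localized model structure, it is itself a transfer system by Proposition~\ref{prop:transferaf}. By the minimality of $\langle\cdot\rangle$, the inclusion $\langle\AF\cup\Gamma_f\rangle\subseteq R_f(\AF)$ then follows once we establish the naive containment $\AF\cup\Gamma_f\subseteq R_f(\AF)$. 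The inclusion $\AF\subseteq R_f(\AF)$ is one of the bulleted basic properties of right localization listed in Section~\ref{sec:bousfieldloc}, so the real content is to verify $\Gamma_f\subseteq \F\cap R_f(\W)$.

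Fix a golden arrow $p\colon s\to t$ with $s\in S(\sigma)$ and $t\in T(\sigma)$ for some short $\sigma\in R_f(\W)\setminus\W$. For $p\in\F$, I would apply MC5 to factor $p=q\circ i$ with $i\colon s\to z$ an acyclic cofibration and $q\colon z\to t$ a fibration. Since $i\in\W$, the object $z$ lies in $[s]_\W=[s(\sigma)]_\W$, while $s\leq z\leq t$ with $t\in T(\sigma)$ puts $z$ into the very set $\{y\in[s(\sigma)]_\W : y\leq t'\text{ for some }t'\in T(\sigma)\}$ in which $s$ is maximal by the definition of $S(\sigma)$. Hence $z=s$, so $i=\id$ and $p=q\in\F$.

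For $p\in R_f(\W)$, the objects $s$ and $t$ lie in the same $R_f(\W)$-equivalence class, because $s\sim_\W s(\sigma)$, $\sigma\in R_f(\W)$ bridges $s(\sigma)$ with $t(\sigma)$, and $t\sim_\W t(\sigma)$. Since $s\leq t$ in the lattice, the claim reduces to the general fact that, in any model structure on a lattice, a morphism between two comparable objects that already lie in the same weak-equivalence class is itself a weak equivalence. This is the step I expect to be the main obstacle: it does not follow from decomposability and 2-out-of-3 in a single stroke, and in a careful write-up one would prove it either by induction on the length of a connecting zig-zag, using meets and joins to straighten it, or by appealing to the explicit iterative procedure of Construction~\ref{con:rf}, whose stages are visibly closed under 2-out-of-3 and absorb all the short edges that arise.
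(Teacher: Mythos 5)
Your proof is correct and takes essentially the same route as the paper's: reduce the inclusion of the transfer-system closure to the set-level containment $\AF\cup\Gamma_f\subseteq R_f(\AF)$ using that $R_f(\AF)$ is itself a transfer system, show each Golden Arrow is a fibration by letting the maximality built into $S(\sigma)$ force the acyclic-cofibration factor of its MC5 factorization to be an identity, and conclude it lies in $R_f(\W)$ because its endpoints are in the same $R_f(\W)$-equivalence class. The step you flag as ``the main obstacle''---that a morphism in a lattice between two objects in a common weak-equivalence class of a model structure is itself a weak equivalence---is precisely the inference the paper also makes without comment in its sentence ``the source and target of $g$ lie in the same weak equivalence class \dots\ so $g$ is also in $R_f(\W)$,'' so your caution there is reasonable but does not place your argument behind the paper's.
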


\begin{proof}
    We begin by showing that $\Gamma_f \subset \F$. Indeed, if $g \in \Gamma_f$ then by the factorization axiom there must exists an acyclic cofibration $i$, and a fibration $p$, such that $g= p \circ i$. 
    However, by the maximality conditions that define $\Gamma_f$, any such factorization must have $i = \id_{s(g)}$, hence $p = g$, so $g \in \F$. 
    
    By definition of the Golden Arrows, the source and target of $g$ lie in the same weak equivalence class after localizing along $f$, so $g$ is also in $ R_f(\W)$.
    Hence $\AF \cup \Gamma_f \subseteq R_f(\AF)$.
    By taking closures under pullbacks and compositions as in Definition \ref{def:rubin}, we obtain 
    $$ \langle \AF \cup \Gamma_f \rangle\subseteq \langle R_f(\AF) \rangle = R_f(\AF)$$ as claimed. 
\end{proof}

As it would turn out, the reverse inclusion holds.

\begin{lemma}\label{lem:supset} Let $(\W,\AF)$ be a model structure with fibrations $\F$.
The acyclic fibrations after right localization are contained in the transfer system generated by $\AF$ and the Golden Arrows $\Gamma_f$, i.e.,
    \[
     \left< \AF\cup \Gamma_f  \right> \supseteq \F \cap R_f(\W) = R_f(\AF).
    \]
  
\end{lemma}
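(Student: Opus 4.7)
The plan is to take an arbitrary $p\in R_f(\AF)$ and exhibit it as a composition of an arrow in $\AF$ with a pullback of a Golden Arrow, which will force $p\in \langle \AF\cup\Gamma_f\rangle$. Since $R_f(\F)=\F$, we have $R_f(\AF)=\F\cap R_f(\W)$, so either $p\in \W$ -- in which case $p\in \AF\subseteq \langle \AF\cup\Gamma_f\rangle$ and we are done -- or $p$ is a new weak equivalence in $R_f(\W)\setminus \W$. In the latter case, Remark \ref{rem:goldensquare} produces a Golden Arrow $g\in \Gamma_f$ with $g\ge p$ as a commuting square, and with the additional property that the left vertical $s(p)\to s(g)$ lies in $\W$.

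I then factor $p$ through the lattice meet $m:=s(g)\wedge t(p)$. Since $s(p)\le s(g)$ and $s(p)\le t(p)$, one has $s(p)\le m\le t(p)$, yielding a factorization
\[
s(p)\xrightarrow{\;\alpha\;} m \xrightarrow{\;\beta\;} t(p).
\]
The morphism $\beta$ is, by construction, the pullback of $g\colon s(g)\to t(g)$ along $t(p)\to t(g)$, and therefore already lies in $\langle \AF\cup\Gamma_f\rangle$. The remaining task is to show $\alpha\in \langle \AF\cup\Gamma_f\rangle$; in fact I will show the stronger statement that $\alpha\in \AF=\W\cap \F$.

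That $\alpha\in\W$ is immediate from decomposability of $\W$ on lattices (recorded earlier in the paper): the composite $s(p)\to m\to s(g)$ coincides with the weak equivalence $s(p)\to s(g)$, forcing both factors into $\W$. The main obstacle is showing $\alpha\in \F$, and for this I intend to exploit the poset structure of $P$. The proof of Lemma \ref{lem:subset} shows that every Golden Arrow is a fibration, so $g\in\F$, hence the pullback $\beta\in\F$, while $p=\beta\circ\alpha\in\F$ by assumption. Given any acyclic cofibration $i\colon X\to Y$ together with a lifting square against $\alpha$, post-composing with $\beta$ produces a lifting square against $p$ -- the required commutativity of the enlarged square is automatic because $P$ is a poset and hence admits at most one morphism between any two objects. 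The fibration $p$ then supplies a lift $Y\to s(p)$, and the same uniqueness of morphisms in a poset verifies that this lift solves the original problem for $\alpha$. Therefore $\alpha\in\F$, so $\alpha\in\AF\subseteq \langle \AF\cup \Gamma_f\rangle$; closing under composition gives $p=\beta\circ\alpha\in \langle \AF\cup\Gamma_f\rangle$, as desired.
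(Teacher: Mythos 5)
Your proof is correct and follows essentially the same strategy as the paper's: take $p\in R_f(\AF)\setminus\AF$, invoke Remark~\ref{rem:goldensquare} to obtain a Golden Arrow $g$ dominating $p$ with $s(p)\to s(g)\in\W$, factor $p$ through $m=s(g)\wedge t(p)$ (this is precisely the paper's maximal element $b$ of the set $B$), and show that the first leg lies in $\AF$ while the second is a pullback of $g$. Your treatment is slightly leaner in two places. For $\alpha\in\F$, the paper just observes that $\alpha$ is the pullback of $p$ along $m\to t(p)$ and uses closure of fibrations under pullback; your explicit lifting check against acyclic cofibrations re-proves this same fact from scratch, which is correct but longer. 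For $\beta$, you note directly that it is the pullback of $g$ along $t(p)\to t(g)$, whereas the paper first forms the pushout $w=t(p)\vee s(g)$ and exhibits $\beta$ as a pullback of a pullback of $g$; your one-step identification is the cleaner version of the same computation, since the morphism $t(p)\to t(g)$ is already available from the dominating square.
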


\begin{proof}
    Let $x\rightarrow y \in R_f(\AF)$. If $x\rightarrow y \in \AF$ then clearly it is in $R_f(\AF)$, and there is nothing to show. So assume $x\rightarrow y$ is not in $\AF$, i.e. $x \rightarrow y$ is an acyclic fibration after right localization but not before. Note that this implies in particular that $x\rightarrow y$ is a fibration before and after localization. \\
    We consider the following set.
    \[
    \Gamma_f \cap \{ g \,|\, x\rightarrow s(g)  \text{ is a weak equivalence before localizing, and } t(g) \geq y \}.
    \]
This set is non-empty because of Remark \ref{rem:goldensquare}, i.e. there is always a commutative diagram with $g \in \Gamma_f$ and $x \rightarrow s(g) \in \W$.

    \[
    \xymatrix{ s(g) \ar[r]^g &  t(g) \\
    x \ar[r]\ar[u]^{\sim} & y \ar[u]
    }
    \] 
     Let $g$ be an element of the set above, then $g$ is a golden arrow that is greater or equal to $x \rightarrow y$.
    Let $$B = \{ z \,\vert\, x \leq z<y,\,\, z <s(g) \},$$ and $b$ any maximal element of $B$ (i.e. there is no $b' \in B$ with $b' > b$). In other words, the maximal option of a factorization of $x \rightarrow y$ is of the following form.
    \[
    \xymatrix{ x \ar[r]\ar[dr] & z \ar[d]\ar[r] & y \\
    & s(g) & 
    }
    \]
    Note that as $x \in B$, $B$ is non-empty. 
    We show that the factorization
    \[
\xymatrix{ x \ar@/^1pc/[rrrr] \ar[rr] & & b \ar[rr] & &y
}    
\]

    exhibits $x\rightarrow y$ as an element of $\left< \AF \cup \Gamma_f \right>$.
    Since $x\rightarrow y$ is in $\F$ by assumption,  the following pullback square shows that $x\rightarrow b$ is in $\F$ too.
    \[
    \begin{tikzcd}
    x \arrow[r] \arrow[d, ""']  \arrow[dr, phantom, "\lrcorner", very near start] & x \arrow[d, ""] \\
    b \arrow[r, ""'] & y 
    \end{tikzcd}
    \]
We know that $x \rightarrow s(g)$ is a weak equivalence before localizing.
As weak equivalences are decomposable and $x \rightarrow s(g)$ factors over $b$, $x \rightarrow b$ is a weak equivalence and consequently, $x \rightarrow b \in \AF$. \\
Now consider the following commutative diagram
    \[\begin{tikzcd}
	b & {s(g)} & {s(g)} \\
	y & w & {t(g)}
	\arrow[from=1-1, to=1-2]
	\arrow[from=1-1, to=2-1]
	\arrow[from=1-2, to=1-3]
	\arrow[from=1-2, to=2-2]
	\arrow[from=1-3, to=2-3, "g"]
	\arrow[from=2-1, to=2-2]
	\arrow["\lrcorner"{anchor=center, pos=0.125, rotate=180},draw=none, from=2-2, to=1-1]
	\arrow[from=2-2, to=2-3]
    \end{tikzcd}\]
    where $w$ is defined by the left square being a pushout diagram. Note that the universal property of pushouts gives us a map $w \rightarrow t(g)$. By our construction of $b$, the left square is also a pullback diagram. 
        The same argument also makes the right hand square a pullback since $s(g) \leq w$. This implies that $b \rightarrow y$ is a pullback of a pullback of $g$. 
    Therefore we have that $x \rightarrow y$ can be written as a composite of an acyclic fibration and an arrow that is a pullback of an element of $\Gamma_f$, which implies that $$x \rightarrow y \in \left< AF \cup \Gamma_f \right>.$$
\end{proof}

Putting together Lemma \ref{lem:subset} and Lemma \ref{lem:supset} now finally gives us the desired description of right Bousfield localization.
\begin{theorem}\label{thm:golden}
    For any model structure $(\W,\AF)$ on a lattice and a short edge $f$, the closure with respect to the set of Golden Arrows $\Gamma_f$ corresponding to $f$ coincides with right localization at $f$, i.e.
    \[
    \Gamma_f(\W,\AF) = R_f(\W,\AF) .
    \]

More precisely, we have $R_f(\W,\AF)=(R_f(\W), \left<\AF \cup \Gamma_f\right>),$
where $R_f(\W)$ is given by Construction \ref{con:rf}, and $\left<\AF\cup \Gamma_f\right>$ is the smallest transfer system containing $\AF$ and $\Gamma_f$.
\qed
    \end{theorem}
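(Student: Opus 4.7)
The plan is to observe that almost all the work has already been done by the preceding lemmas and constructions, so the proof of Theorem~\ref{thm:golden} will consist of assembling them and checking that the resulting data is a bona fide model structure equal to $R_f(\W,\AF)$. Recall that a model structure on a lattice is uniquely determined by the pair $(\W,\AF)$, as noted in the discussion following Definition~\ref{def1.3}; therefore it suffices to verify the two equalities $R_f(\W) = R_f(\W)$ (tautological, with the description given by Construction~\ref{con:rf}) and $R_f(\AF) = \langle \AF \cup \Gamma_f\rangle$.

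The first step I would take is to invoke Proposition~\ref{prop:M(W,AF)_is_a_model_str} applied to $M = \Gamma_f$ in order to guarantee that $\Gamma_f(\W,\AF) = (R_{\Gamma_f}(\W), \langle \AF\cup \Gamma_f\rangle)$ is indeed a model structure on the ambient lattice. By the universal property of Bousfield localization and the fact (established in Lemma~\ref{lem:subset}) that every Golden Arrow is a weak equivalence in $R_f(\W,\AF)$, there is an inequality $R_{\Gamma_f}(\W) \subseteq R_f(\W)$. Conversely, adding $f$ to $\W$ already forces all of $\Gamma_f$ into the weak equivalences (each golden arrow connects two points lying in the same post-localization equivalence class, by construction), so $R_f(\W) \subseteq R_{\Gamma_f}(\W)$, giving equality of the weak equivalence classes.

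The second step, and the actual heart of the matter, is the identification of the acyclic fibrations. Here I would simply combine Lemma~\ref{lem:subset} and Lemma~\ref{lem:supset}: the former gives $\langle \AF \cup \Gamma_f\rangle \subseteq R_f(\AF)$ via the MC5-based argument that each $g\in \Gamma_f$ is necessarily a fibration thanks to the maximality conditions built into $S(\sigma)$ and $T(\sigma)$, while the latter gives the reverse inclusion by producing, for an arbitrary $x\to y\in R_f(\AF)\setminus \AF$, a factorization $x\to b\to y$ in which $x\to b$ lies in $\AF$ and $b\to y$ is a pullback of a suitable Golden Arrow (existence of which is guaranteed by Remark~\ref{rem:goldensquare}).

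I do not expect a serious obstacle at this stage, since all of the technical work has been localised in the two preceding lemmas and in Construction~\ref{con:rf}; the hardest conceptual point has already been absorbed into the design of $\Gamma_f$, namely the choice of maximal sources and targets that simultaneously ensures each golden arrow is a fibration (via MC5) and that every new acyclic fibration is dominated by a golden arrow. The proof therefore reduces to citing Lemmas~\ref{lem:subset} and~\ref{lem:supset}, noting that the two pairs $(R_f(\W), R_f(\AF))$ and $(R_{\Gamma_f}(\W), \langle \AF\cup \Gamma_f\rangle)$ agree coordinatewise, and concluding equality of model structures by uniqueness.
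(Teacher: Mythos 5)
Your proposal is correct and matches the paper's own proof in essence: the paper proves the theorem simply by citing Lemma~\ref{lem:subset} and Lemma~\ref{lem:supset} to obtain $\left<\AF\cup\Gamma_f\right>=R_f(\AF)$, which is exactly the heart of your second step. You additionally spell out the verification that $R_{\Gamma_f}(\W)=R_f(\W)$ (via the universal property and Remark~\ref{rem:goldensquare}), a point the paper leaves implicit; this is a reasonable extra check, though note the argument for $R_f(\W)\subseteq R_{\Gamma_f}(\W)$ should rest on $f$ being dominated by some golden arrow $g$ with $s(f)\to s(g)\in\W$ and decomposability of weak equivalences, rather than on the observation that $\Gamma_f\subseteq R_f(\W)$, which only gives the other containment.
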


We conclude this section by noting that for a fixed weak equivalence set $W$, the set $\Gamma_f$ is minimal among all sets $M$ such that $M(\W,\AF) = R_f(\W,\AF)$ (see Definition \ref{def:model}) in the following sense: there is no strict subset $M \subsetneq \Gamma_f$ such that $M(\W,\AF) = R_f(\W,\AF)$ for all possible model structures $(\W, \AF)$ with this same $\W$. This illustrated in Example \ref{ex:squareexample}, where we have a set of two Golden Arrows $\Gamma_f=\{g_1, g_2\}$, but adding just $g_1$ or just $g_2$ to the old acyclic fibrations would not result in the correct transfer system. 

\section{Left Localization and Co-Transfer Systems}\label{sec:dual_story}

As outlined in Section \ref{sec:bousfieldloc}, right localization increases $\W$ and $\AF$ while left localization increases $\W$ and $\AC$. 
In the previous section we described the change in $\AF$ after right localization in terms of a set of arrows we called Golden Arrows, associated to the morphism we wished to right localize at. 
In this section is to provide the analogous set of arrows, called \emph{Copper Arrows}, for left localization at a given morphism.  
We begin by defining the dual notion to a transfer system.
\begin{definition}\label{def:cotransfer}
	A \textit{co-transfer system} on a category $\Cate$ is a wide subcategory that is closed under pushouts along arbitrary morphisms of $\Cate$.
\end{definition}

Dually to Proposition \ref{prop:transferaf}, the acyclic cofibrations of a model structure form a co-transfer system, as acyclic cofibrations are closed under pushouts.
Similarly to our discussion after Definition \ref{def1.3}, the data of a model structure is there completely determined by giving its weak equivalences $\W$ and its co-transfer system of acyclic cofibrations $\AC$. We can therefore denote a model structure by $\llbracket \W,\AC\rrbracket_\Cate$, dropping the subscript when there is no risk of ambiguity.

\begin{definition}
    Let $\llbracket \W,\AC \rrbracket_\Cate$ be a model structure on a lattice $\Cate$ and $f$ a short arrow in $\Cate$.
    The \textit{Copper Arrows associated to $f$} by are defined by the set
    \[
    \kappa_f = \{   s(\sigma)\rightarrow t(\sigma)\ \vert \ \sigma \in L_f(\W)\setminus\W, \;  s(\sigma) \text{ is minimal in } [s(\sigma)]_{\W} \text{ and $t$ is minimal in } [t(\sigma)]_{\W}    \}.
    \]
\end{definition}

\begin{example}
Consider the model structure below on $[2] \times [1]$ below, where the shaded regions depict the weak equivalences, and the acylic cofibrations are the hooked arrows within those. \label{example:copper_arrows}
    \begin{figure}[H]
\begin{tikzpicture}[scale=1.1,
  hookarrow/.style={{Hooks[right]}->}]
\draw[red!30, fill, rounded corners] (0.6, 1.6) rectangle (3.4 ,2.4);
\draw[red!30, fill, rounded corners] (1.6, 2.4) rectangle (3.4 ,0.6);
\draw[red!30, fill, rounded corners] (0.6, 0.6) rectangle (1.4 ,1.4);

 \foreach \x in {1,2, 3}
 \foreach \y in {1,2}
    \fill (\x,\y) circle (1mm);
\node (A) at (1,2) {};
\node (B) at (2,2) {};
\node (C) at (3,2) {};
\node (D) at (1,1) {};
\node (E) at (2,1) {};
\node (F) at (3,1) {};
\node (G) at (2,3) {};
\node (H) at (1,3) {};

\draw[thick, hookarrow](E) edge (C);
\draw[thick, hookarrow](A) edge (B);
\draw[thick, hookarrow](E) edge (B);
\draw[thick, hookarrow](B) edge (C);
\draw[thick, hookarrow](F) edge (C);
\draw[thick, hookarrow](A) edge[bend left]  (C);
 \draw[line width=.5mm, black, -{stealth}] (D) edge node[above]{$f$} (E);
\end{tikzpicture}
\end{figure}
The set of Copper Arrows associated to the arrow labeled $f$ by the dashed arrows $c_1$ and $c_2$ below.
\label{example:copper_arrows}
    \begin{figure}[H]
\begin{tikzpicture}[scale=1.1,
  hookarrow/.style={{Hooks[right]}->}]
\draw[red!30, fill, rounded corners] (0.6, 1.6) rectangle (3.4 ,2.4);
\draw[red!30, fill, rounded corners] (1.6, 2.4) rectangle (3.4 ,0.6);
\draw[red!30, fill, rounded corners] (0.6, 0.6) rectangle (1.4 ,1.4);

 \foreach \x in {1,2, 3}
 \foreach \y in {1,2}
    \fill (\x,\y) circle (1mm);
\node (A) at (1,2) {};
\node (B) at (2,2) {};
\node (C) at (3,2) {};
\node (D) at (1,1) {};
\node (E) at (2,1) {};
\node (F) at (3,1) {};
\node (G) at (2,3) {};
\node (H) at (1,3) {};

\draw[thick, hookarrow](E) edge (C);
\draw[thick, hookarrow](A) edge (B);
\draw[thick, hookarrow](E) edge (B);
\draw[thick, hookarrow](B) edge (C);
\draw[thick, hookarrow](F) edge (C);
\draw[thick, hookarrow](A) edge[bend left]  (C);
    \draw[line width=2.5pt, gold, -{stealth}, dashed] (D) edge[bend left] node[left]{\color{black}$c_1$} (A);
    \draw[line width=2.5pt, gold, -{stealth}, dashed] (D) edge[bend right] node[below]{\color{black}$c_2$} (E);
      \draw[line width=.5mm, black, -{stealth}] (D) edge node[above]{$f$} (E);
\end{tikzpicture}
\end{figure}

\end{example}

With this in place, we now see that the acyclic cofibrations after left localization at a short arrow $f$ are given by the smallest co-transfer system containing the original $\AC$ as well as $\kappa_f$ in the following way. As the proof follows the same strategy as that of Theorem \ref{thm:golden}, we will prioritize brevity over detail. 

\begin{theorem}\label{thm:copper}
    For any model structure $\llbracket \W,\AC \rrbracket$ on a lattice, the acyclic cofibrations $L_f(\AC)$ of the left localization along a short arrow $f$ are given by the closure under composition of the union of the acyclic cofibrations $\AC$ and all pushouts of the copper arrows $\kappa_f$, i.e. by the smallest co-transfer system containing $\AC$ and $\kappa_f$. 
\end{theorem}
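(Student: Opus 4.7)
The plan is to dualize the proof of Theorem \ref{thm:golden} throughout, interchanging pullbacks with pushouts, fibrations with cofibrations, and the maximality conditions defining $\Gamma_f$ with the minimality conditions defining $\kappa_f$.

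I first establish the inclusion $\langle \AC \cup \kappa_f \rangle \subseteq L_f(\AC)$, where the brackets denote the smallest co-transfer system containing the enclosed set. This dualizes Lemma \ref{lem:subset}: given $c \in \kappa_f$, apply MC5 to write $c = q \circ j$ with $j$ a cofibration and $q$ an acyclic fibration. Since $q$ is a weak equivalence, its source lies in $[t(c)]_\W$, and the minimality of $t(c)$ in its weak equivalence class forces $q = \mathrm{id}_{t(c)}$. Thus $c = j$ is a cofibration, and combined with $c \in L_f(\W)$ by construction, we obtain $c \in L_f(\AC)$. Since $L_f(\AC)$ is itself a co-transfer system, closure under pushouts and composition yields the desired containment.

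For the reverse inclusion $L_f(\AC) \subseteq \langle \AC \cup \kappa_f \rangle$, I dualize Lemma \ref{lem:supset}. First I would establish the analogue of Remark \ref{rem:goldensquare}: for every new weak equivalence $w \in L_f(\W) \setminus \W$, there is a copper arrow $c \in \kappa_f$ with $s(c) \leq s(w)$, $t(c) \leq t(w)$, and $t(c) \to t(w) \in \W$ before localization. Given $x \to y \in L_f(\AC) \setminus \AC$, which remains a cofibration since $L_f(\Cof) = \Cof$, choose such a $c$ with $t(c) \leq y$ and $t(c) \to y \in \W$. Set $B^{\ast} = \{ z \mid x < z \leq y, \, t(c) \leq z \}$ and let $b$ be a minimal element. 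In the factorization $x \to b \to y$, the arrow $b \to y$ lies in $\AC$: it is a cofibration by pushout from $x \to y$, and a weak equivalence by decomposability of $\W$ applied to $t(c) \to b \to y$, noting $t(c) \to y \in \W$. The arrow $x \to b$ is then exhibited as a pushout of a pushout of $c$ by forming the dual of the diagram in Lemma \ref{lem:supset}, with the meet $x \wedge s(c)$ at its initial corner, and using the minimality of $b$ to verify that the constructed squares are simultaneously pushouts and pullbacks. Composing shows $x \to y \in \langle \AC \cup \kappa_f \rangle$.

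The principal obstacle will be the pushout-of-pushout step: whereas the original uses maximality of $b$ in $B$ to identify key squares as pullbacks (meets in the lattice), the dual requires minimality of $b$ in $B^{\ast}$ to identify the corresponding squares as pushouts (joins), with the direction of all relevant lattice inequalities reversed. Ensuring that $B^{\ast}$ is non-empty, selecting the correct copper arrow $c$ to begin the argument, and showing that the two conditions $t(c) \leq b$ and $x < b$ together with the minimality of $b$ really do force the constructed diagram to be an iterated pushout of $c$ will require the most careful bookkeeping of joins and meets.
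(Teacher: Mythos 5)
Your forward inclusion $\langle \AC \cup \kappa_f \rangle \subseteq L_f(\AC)$ matches the paper's argument: use MC5 to factor a Copper Arrow, minimality of the target kills the acyclic-fibration part, and then close under co-transfer system operations.

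For the reverse inclusion, however, the paper does \emph{not} dualize Lemma~\ref{lem:supset} wholesale. Given $\phi\colon x \to y \in L_f(\AC)\setminus\AC$, it picks $a$ minimal in $[x]_\W$ and $z$ minimal in $[y]_\W$ with $x \leq z \leq y$; the chain $a \leq x \leq z$ guarantees that $a \to z$ exists as an arrow, is a Copper Arrow, and that $x \to z$ is a \emph{single} pushout of $a \to z$ along $a \to x$ (since $x \vee z = z$). The decomposition $\phi = (z \to y)\circ(x \to z)$ with $z \to y \in \AC$ then finishes immediately. Your route instead introduces a dual of Remark~\ref{rem:goldensquare}, the set $B^\ast$, and a ``pushout of a pushout'' diagram, and you flag the last as the hard part. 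Two things to note. First, the dual Remark is a genuine additional lemma you would need to prove, and it is not a free dualization: you need a Copper Arrow $c$ with $s(c) \leq x$ and $t(c) \leq y$, and simply choosing minimal elements in $[x]_\W$ and $[y]_\W$ independently does not guarantee $s(c) \leq t(c)$, i.e.\ that $c$ is actually an arrow. The paper sidesteps this completely by choosing $z$ to lie above $x$. Second, the ``pushout of a pushout'' worry evaporates once you compute: your minimal $b \in B^\ast$ is just $x \vee t(c)$, and $x \to (x\vee t(c))$ is already the single pushout of $c$ along $s(c) \to x$; the asymmetry that forces Lemma~\ref{lem:supset} into a two-step pullback (the possibility that $s(g) \leq y$) does not have a problematic dual here because $t(c) \leq x$ would put $x$ in $[y]_\W$ and contradict $\phi \notin \AC$. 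So your approach would ultimately yield the theorem, but after filling in a nontrivial existence lemma and then discovering that the machinery you brought over from Lemma~\ref{lem:supset} collapses; the paper's more direct factorization is both shorter and avoids the existence issue entirely.
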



\begin{proof}
	First, observe that by the factorization axiom of model categories (MC5) all Copper Arrows must become acyclic cofibrations upon left localization, so $\kappa_f \subset L_f(\AC)$. 
    It follows that all their pushouts along any morphism and compositions with morphisms in $\AC$ must become acyclic cofibrations upon left localization.\\
    For the converse we show that all new acyclic cofibrations, i.e.~ elements of $L_f(\AC) \setminus \AC$, can be factored as a composition of a pushout of a copper arrow and an element of $\AC$. Consider a morphism 
    \[ \phi: x \rightarrow y \in L_f(\AC) \setminus \AC, \]
    then $x$ and $y$ must have been in different weak equivalence classes before localization. Let $a$ be a minimal element of $[x]_\W$ and $z$ be a minimal object in $[y]_\W$ of $y$ such that $\phi$ factors through $z$
	\begin{equation}\label{eq:AC_decomposition}
	    x \rightarrow z \rightarrow y .
	\end{equation}
    The map \(a\rightarrow z\) is a Copper Arrow by definition, and $x \rightarrow z$ is a pushout of it along the morphism $a \rightarrow x$.\\
 On the other hand, since $\phi \in L_f(\AC)$, it was already a cofibration before localizing, as cofibrations do not change under left localization. 
 Also, cofibrations are closed under pushouts, therefore the arrow $z\rightarrow y$ in the
 following pushout square is a cofibration.
    \[
    \begin{tikzcd}
    x \arrow[r] \arrow[d, "\phi"'] & z \arrow[d, ""] \\
    y \arrow[r, ""']   & y \arrow[ul, phantom, "\ulcorner", very near start]
    \end{tikzcd}
    \]
     But we have $[z]_{\W} = [y]_{\W}$ by construction, hence $z \rightarrow y$ was actually an acyclic cofibration before localization. 
    With this, observe that (\ref{eq:AC_decomposition}) exhibits $\phi$ as a composition of a pushout of a Copper Arrow and an acyclic cofibration.
\end{proof}

\section{Localization and Saturation}\label{sec:saturation}

We saw in Proposition \ref{prop:totalordergoldenarrow} that for a total order $[n]$, the Golden Arrow construction is particularly simple as only one arrow is needed.
Together with the classification of transfer systems on $[n]$ given in \cite[Theorem 20]{BBR}, this led to the result that on $[n]$, any model structure can be obtained from the trivial model structure (i.e. the unique model structure with only trivial weak equivalences) via a sequence of left and right Bousfield localizations \cite[Theorem 5.12]{BOOR23}. 

It turns our the same holds true for the subgroup lattice of $[1]\times[1]$, which can be verified directly from observing the effect of localization on the 23 possible model structures. 
We illustrate this in Figure \ref{fig:gridex}.
\begin{figure}
    \centering
    \includegraphics[scale = 1]{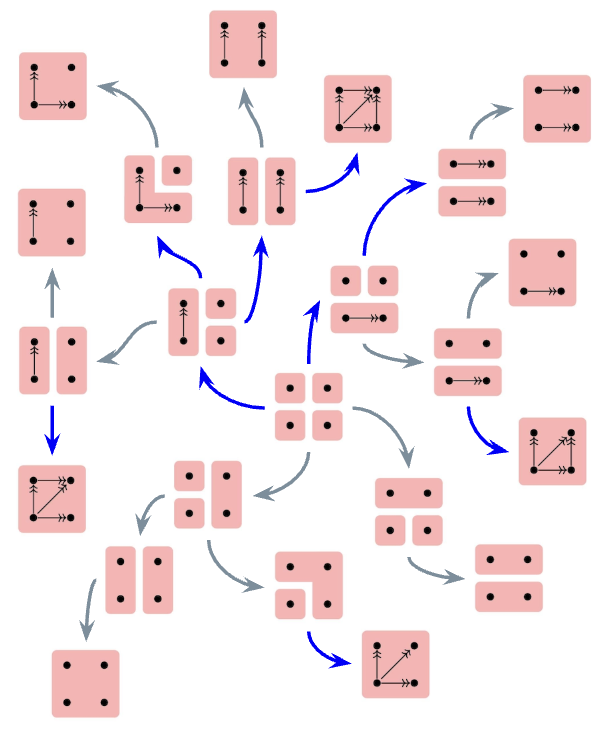}
    \caption{Localizations on the $[1] \times [1]$ grid. The blue arrows denote right localization, and the grey arrows a left localization.}
    \label{fig:gridex}
\end{figure}

However, in general one cannot obtain every model structure via localizations of the trivial one.
Consider the model structure on the subgroup lattice $[2]\times[1]$ depicted in Figure \ref{fig:anotherpic}.
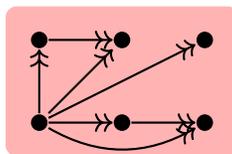
\begin{figure}[H]
\begin{tikzpicture}[scale=1.1]
\draw[red!30, fill, rounded corners] (0.6, 0.6) rectangle (3.4 ,2.4);
 \foreach \x in {1,2, 3}
 \foreach \y in {1,2}
    \fill (\x,\y) circle (1mm);
\node (A) at (1,2) {};
\node (B) at (2,2) {};
\node (C) at (3,2) {};
\node (D) at (1,1) {};
\node (E) at (2,1) {};
\node (F) at (3,1) {};

\draw[thick, ->>] (A) edge (B);
\draw[thick, ->>](D) edge (E);
\draw[thick, ->>] (D) edge (C);
\draw[thick, ->>](D) edge (A);
\draw[thick, ->>](E) edge (F);
\draw[thick, ->>](D) edge[bend right] (F);
\draw[thick, ->>](D) edge (B);
\end{tikzpicture}
\caption{A model structure where all morphisms are weak equivalences, and the acyclic fibrations are given by the double-pointed arrows.}
\label{fig:anotherpic}
\end{figure}
\vspace{-.50cm}
This is not the left Bousfield localization of any other model structure on $[2] \times [1]$ by the following reasoning. 
Left localization preserves acyclic fibrations, therefore any model structure that our example could be a left localization of would have to have the same acyclic fibrations and a strictly smaller weak equivalence set. 
However, the acyclic fibrations contain the long diagonal from (0,0) to (2,1). By decomposability of weak equivalences, any weak equivalence set containing the long diagonal has to contain all the morphisms in this category.
Therefore it is not possible to have a model structure with the same acyclic fibrations but fewer weak equivalences. 

The argument that the example is not a right Bousfield localization of any other model structure is relatively similar. 
The acyclic cofibrations in our example are the hooked arrows in the following picture.
\begin{figure}[H]
\begin{tikzpicture}[scale=1.1,
  hookarrow/.style={{Hooks[right]}->}]
\draw[red!30, fill, rounded corners] (0.6, 0.6) rectangle (3.4 ,2.4);
 \foreach \x in {1,2, 3}
 \foreach \y in {1,2}
    \fill (\x,\y) circle (1mm);
\node (A) at (1,2) {};
\node (B) at (2,2) {};
\node (C) at (3,2) {};
\node (D) at (1,1) {};
\node (E) at (2,1) {};
\node (F) at (3,1) {};
\node (G) at (2,3) {};
\node (H) at (1,3) {};

\draw[thick, hookarrow](E) edge (C);
\draw[thick, hookarrow](E) edge (B);
\draw[thick, hookarrow](B) edge (C);
\draw[thick, hookarrow](F) edge (C);
\draw[thick, hookarrow](A) edge[bend left]  (C);
\end{tikzpicture}
\end{figure}
As right Bousfield localization preserves acyclic cofibrations, any model structure that the original is a right localization of would need to have the same acyclic cofibrations.
By assumption, it must have strictly fewer weak equivalences than the original model structure above, i.e. it must be the right localization of a model structure with its weak equivalences being the shaded regions in the picture below.
\begin{figure}[H]
\begin{tikzpicture}[scale=1.1,
  hookarrow/.style={{Hooks[right]}->}]
\draw[red!30, fill, rounded corners] (0.6, 1.6) rectangle (3.4 ,2.4);
\draw[red!30, fill, rounded corners] (1.6, 2.4) rectangle (3.4 ,0.6);
\draw[red!30, fill, rounded corners] (0.6, 0.6) rectangle (1.4 ,1.4);

 \foreach \x in {1,2, 3}
 \foreach \y in {1,2}
    \fill (\x,\y) circle (1mm);
\node (A) at (1,2) {};
\node (B) at (2,2) {};
\node (C) at (3,2) {};
\node (D) at (1,1) {};
\node (E) at (2,1) {};
\node (F) at (3,1) {};
\node (G) at (2,3) {};
\node (H) at (1,3) {};

\draw[thick, hookarrow](E) edge (C);
\draw[thick, hookarrow](E) edge (B);
\draw[thick, hookarrow](B) edge (C);
\draw[thick, hookarrow](F) edge (C);
\draw[thick, hookarrow](A) edge[bend left]  (C);
\end{tikzpicture}
\end{figure}
Observe that the only model structures with this weak equivalence shape and these acyclic cofibrations are given by the two diagrams below.
We determined this from the list of 182 model structures on $[2] \times [1]$ generated by \cite{Bal25}, or do a quick calculation using Theorem \ref{thm:tmin}.
\begin{figure}[H]\label{fig:nolocalization}
\begin{tikzpicture}[scale=1.1]
\draw[red!30, fill, rounded corners] (0.6, 1.6) rectangle (3.4 ,2.4);
\draw[red!30, fill, rounded corners] (1.6, 2.4) rectangle (3.4 ,0.6);
\draw[red!30, fill, rounded corners] (0.6, 0.6) rectangle (1.4 ,1.4);

 \foreach \x in {1,2, 3}
 \foreach \y in {1,2}
    \fill (\x,\y) circle (1mm);
\node (A) at (1,2) {};
\node (B) at (2,2) {};
\node (C) at (3,2) {};
\node (D) at (1,1) {};
\node (E) at (2,1) {};
\node (F) at (3,1) {};
\node (G) at (2,3) {};
\node (H) at (1,3) {};
\end{tikzpicture} 
\hspace{3cm}
\begin{tikzpicture}[scale=1.1]
\draw[red!30, fill, rounded corners] (0.6, 1.6) rectangle (3.4 ,2.4);
\draw[red!30, fill, rounded corners] (1.6, 2.4) rectangle (3.4 ,0.6);
\draw[red!30, fill, rounded corners] (0.6, 0.6) rectangle (1.4 ,1.4);

 \foreach \x in {1,2, 3}
 \foreach \y in {1,2}
    \fill (\x,\y) circle (1mm);
\node (A) at (1,2) {};
\node (B) at (2,2) {};
\node (C) at (3,2) {};
\node (D) at (1,1) {};
\node (E) at (2,1) {};
\node (F) at (3,1) {};
\node (G) at (2,3) {};
\node (H) at (1,3) {};

\draw[thick, ->>](E) edge (F);

\end{tikzpicture}
\end{figure}
Right localizing either of these at a short arrow will not give rise to a model structure with the original $\AF$.
Consider instead right localizing at the arrow $(0,0) \rightarrow (1,0)$ or $(0,0)\rightarrow (0,1)$ in the left diagram. 
By our results from the previous section the resulting model structure would be the following.
\begin{figure}[H]
\begin{tikzpicture}[scale=1.1]
\draw[red!30, fill, rounded corners] (0.6, 0.6) rectangle (3.4 ,2.4);

 \foreach \x in {1,2, 3}
 \foreach \y in {1,2}
    \fill (\x,\y) circle (1mm);
\node (A) at (1,2) {};
\node (B) at (2,2) {};
\node (C) at (3,2) {};
\node (D) at (1,1) {};
\node (E) at (2,1) {};
\node (F) at (3,1) {};
\node (G) at (2,3) {};
\node (H) at (1,3) {};
\draw[thick, ->>](D) edge (A);
\draw[thick, ->>](D) edge (B);
\draw[thick, ->>](D) edge (E);
\draw[thick, ->>](D) edge (C);
\draw[thick, ->>](D) edge[bend right] (F);
\end{tikzpicture} 
\end{figure}
Similarly, if we right localize at the arrow $(0,0) \rightarrow (1,0)$ or $(0,0)\rightarrow (0,1)$ in the diagram on the right, by our results from the previous section, the resulting model structure would be the following.
\begin{figure}[H]
\begin{tikzpicture}[scale=1.1]
\draw[red!30, fill, rounded corners] (0.6, 0.6) rectangle (3.4 ,2.4);

 \foreach \x in {1,2, 3}
 \foreach \y in {1,2}
    \fill (\x,\y) circle (1mm);
\node (A) at (1,2) {};
\node (B) at (2,2) {};
\node (C) at (3,2) {};
\node (D) at (1,1) {};
\node (E) at (2,1) {};
\node (F) at (3,1) {};
\node (G) at (2,3) {};
\node (H) at (1,3) {};
\draw[thick, ->>](D) edge (A);
\draw[thick, ->>](D) edge (B);
\draw[thick, ->>](D) edge (E);
\draw[thick, ->>](D) edge (C);
\draw[thick, ->>](D) edge[bend right] (F);
\draw[thick, ->>](E) edge (F);
\end{tikzpicture} 
\end{figure}
The above example shows that, unlike in $[n]$ and $[1] \times [1]$, it is \emph{not} true in general that every model structure can be obtained as a sequence of left and right localizations from the trivial one.
It is nonetheless an intriguing question to consider which model structures can be obtained from which others via localization, and what properties localization may preserve.

\subsection{Saturation}
In this subsection we address how saturation of the transfer system of acyclic fibrations is affected by right localization. We start by recalling the definition of saturation.
\begin{definition}
    A transfer system $T$ is \emph{saturated} if it  satisfies the following property:
    \[\text{if }x \leq y \leq z \in \mathcal{C}\text{ and }x \rightarrow y, x \rightarrow z \in T\text{, then } y \rightarrow z \in T .\]
\end{definition}
Pictorially the saturation condition of a transfer system can be visualized as follows.
\begin{figure}[H]
\begin{tikzpicture}[scale=1.1]

 \foreach \x in {1,2, 3}
 \foreach \y in {1}
    \fill (\x,\y) circle (1mm);
\node (A) at (1,1) {};
\node (D) at (2,1) {};
\node (H) at (3,1) {};
\draw[thick, ->](A) edge (D);
\draw[thick, ->](A) edge[bend right]  (H);
\end{tikzpicture}
$\implies $
\begin{tikzpicture}[scale=1.1]

 \foreach \x in {1,2, 3}
 \foreach \y in {1}
    \fill (\x,\y) circle (1mm);
\node (A) at (1,1) {};
\node (D) at (2,1) {};
\node (H) at (3,1) {};
\draw[thick, ->](A) edge (D);
\draw[thick, ->](A) edge[bend right]  (H);
\draw[thick, ->](D) edge (H);
\end{tikzpicture}.
\end{figure}

Notice that on a transfer system being saturated is equivalent to satisfying the two-out-of-three condition.

\begin{remark}\label{rem:3outof4}
On the two-dimensional grid $[m] \times [n]$, further characterizations for saturation hold. 
Firstly, Proposition 2.10 of \cite{HMOO} states that for every saturated transfer system $T$, $T=\left<S\right>$ where $S$ is the set of all short arrows contained in $T$. 
Being able to work with short arrows instead of the whole transfer system has numerous practical advantages. 
For example, Theorem 3.2 of \cite{HMOO} declares that for a set of short arrows $S$ on the $[m] \times [n]$ grid, $S$ generates a saturated transfer system if and only if:
\begin{itemize}
\item the pullbacks of elements in $S$ are also in $S$,
\item if three out of the four sides of a square are in $S$, then so is the fourth.
\end{itemize}

\end{remark}

Our characterizations of localization applied to the lattice $[2]\times [1]$ show that the acyclic fibrations of model structures not arising as a sequence of left and right Bousfield localizations from the trivial model structure always form an unsaturated transfer system. 
See below for the full list of unsaturated transfer systems that do not arise as from a right or left localization.
We organized the list into groups according to their configuration of short arrows in the left square. \\

        \begin{figure}[H]
        \begin{tikzpicture}[scale=1.1]
\draw[red!30, fill, rounded corners] (0.6, 0.6) rectangle (3.4 ,2.4);
 \foreach \x in {1,2,3}
 \foreach \y in {1,2}
    \fill (\x,\y) circle (1mm);
\node (A) at (1,2) {};
\node (B) at (2,2) {};
\node (C) at (3,2) {};
\node (D) at (1,1) {};
\node (E) at (2,1) {};
\node (F) at (3,1) {};

\draw[thick, ->>](D) edge (E);
\draw[thick, ->>] (D) edge (C);
\draw[thick, ->>](D) edge (A);
\draw[thick, ->>](D) edge (B);
\draw[thick, ->>](D) edge[bend right] (F);
\draw[thick, ->>](E) edge[right] (B);
\end{tikzpicture}
\begin{tikzpicture}[scale=1.1]
\draw[red!30, fill, rounded corners] (0.6, 0.6) rectangle (3.4 ,2.4);
 \foreach \x in {1,2,3}
 \foreach \y in {1,2}
    \fill (\x,\y) circle (1mm);
\node (A) at (1,2) {};
\node (B) at (2,2) {};
\node (C) at (3,2) {};
\node (D) at (1,1) {};
\node (E) at (2,1) {};
\node (F) at (3,1) {};

\draw[thick, ->>](D) edge (E);
\draw[thick, ->>] (D) edge (C);
\draw[thick, ->>](D) edge (A);
\draw[thick, ->>](E) edge (F);
\draw[thick, ->>](D) edge (B);
\draw[thick, ->>](D) edge[bend right] (F);
\draw[thick, ->>](E) edge[right] (B);
\end{tikzpicture}
\begin{tikzpicture}[scale=1.1]
\draw[red!30, fill, rounded corners] (0.6, 0.6) rectangle (3.4 ,2.4);
 \foreach \x in {1,2,3}
 \foreach \y in {1,2}
    \fill (\x,\y) circle (1mm);
\node (A) at (1,2) {};
\node (B) at (2,2) {};
\node (C) at (3,2) {};
\node (D) at (1,1) {};
\node (E) at (2,1) {};
\node (F) at (3,1) {};

\draw[thick, ->>](B) edge[right] (C);
\draw[thick, ->>](D) edge (E);
\draw[thick, ->>] (D) edge (C);
\draw[thick, ->>](D) edge (A);
\draw[thick, ->>](E) edge (F);
\draw[thick, ->>](D) edge (B);
\draw[thick, ->>](D) edge[bend right] (F);
\draw[thick, ->>](E) edge[right] (B);
\draw[thick, ->>](E) edge[right] (C);
\end{tikzpicture}
\captionsetup{labelformat=empty}
\caption{Group 1}
\end{figure}
 \begin{figure}[H]
\begin{tikzpicture}[scale=1.1]
\draw[red!30, fill, rounded corners] (0.6, 0.6) rectangle (3.4 ,2.4);
 \foreach \x in {1,2,3}
 \foreach \y in {1,2}
 \fill (\x,\y) circle (1mm);
\node (A) at (1,2) {};
\node (B) at (2,2) {};
\node (C) at (3,2) {};
\node (D) at (1,1) {};
\node (E) at (2,1) {};
\node (F) at (3,1) {};
\draw[thick, ->>] (A) edge (B);
\draw[thick, ->>](D) edge (E);
\draw[thick, ->>](D) edge (A);
\draw[thick, ->>](D) edge (B);
\draw[thick, ->>](D) edge[bend right] (F);
\draw[thick, ->>](E) edge[right] (B);
\end{tikzpicture}
\begin{tikzpicture}[scale=1.1]
\draw[red!30, fill, rounded corners] (0.6, 0.6) rectangle (3.4 ,2.4);
 \foreach \x in {1,2,3}
 \foreach \y in {1,2}
    \fill (\x,\y) circle (1mm);
\node (A) at (1,2) {};
\node (B) at (2,2) {};
\node (C) at (3,2) {};
\node (D) at (1,1) {};
\node (E) at (2,1) {};
\node (F) at (3,1) {};
\draw[thick,   ->>] (A) edge (B);
\draw[thick,   ->>](D) edge (E);
\draw[thick,   ->>] (D) edge (C);
\draw[thick,  ->>](D) edge (A);
\draw[thick,   ->>](D) edge (B);
\draw[thick,   ->>](D) edge[bend right] (F);
\draw[thick,   ->>](E) edge[right] (B);
\end{tikzpicture}
\begin{tikzpicture}[scale=1.1]
\draw[red!30, fill, rounded corners] (0.6, 0.6) rectangle (3.4 ,2.4);
 \foreach \x in {1,2,3}
 \foreach \y in {1,2}
    \fill (\x,\y) circle (1mm);
\node (A) at (1,2) {};
\node (B) at (2,2) {};
\node (C) at (3,2) {};
\node (D) at (1,1) {};
\node (E) at (2,1) {};
\node (F) at (3,1) {};
\draw[thick,   ->>] (A) edge (B);
\draw[thick,   ->>](D) edge (E);
\draw[thick,   ->>] (D) edge (C);
\draw[thick,   ->>](D) edge (A);
\draw[thick,   ->>](D) edge (B);
\draw[thick,   ->>](D) edge[bend right] (F);
\draw[thick,   ->>](E) edge[right] (B);
\draw[thick,   ->>](F) edge[right] (C);
\end{tikzpicture}
\begin{tikzpicture}[scale=1.1]
\draw[red!30, fill, rounded corners] (0.6, 0.6) rectangle (3.4 ,2.4);
 \foreach \x in {1,2,3}
 \foreach \y in {1,2}
    \fill (\x,\y) circle (1mm);
\node (A) at (1,2) {};
\node (B) at (2,2) {};
\node (C) at (3,2) {};
\node (D) at (1,1) {};
\node (E) at (2,1) {};
\node (F) at (3,1) {};
\draw[thick,   ->>] (A) edge (B);
\draw[thick,   ->>](D) edge (E);
\draw[thick,   ->>] (D) edge (C);
\draw[thick,   ->>](D) edge (A);
\draw[thick,   ->>](E) edge (F);
\draw[thick,   ->>](D) edge (B);
\draw[thick,   ->>](D) edge[bend right] (F);
\draw[thick,   ->>](E) edge[right] (B);
\end{tikzpicture}
\begin{tikzpicture}[scale=1.1]
\draw[red!30, fill, rounded corners] (0.6, 0.6) rectangle (3.4 ,2.4);
 \foreach \x in {1,2,3}
 \foreach \y in {1,2}
    \fill (\x,\y) circle (1mm);
\node (A) at (1,2) {};
\node (B) at (2,2) {};
\node (C) at (3,2) {};
\node (D) at (1,1) {};
\node (E) at (2,1) {};
\node (F) at (3,1) {};

\draw[thick,   ->>] (A) edge (B);
\draw[thick,   ->>](D) edge (E);
\draw[thick,   ->>] (D) edge (C);
\draw[thick,   ->>](D) edge (A);
\draw[thick,   ->>](E) edge (F);
\draw[thick,   ->>](D) edge (B);
\draw[thick,   ->>](D) edge[bend right] (F);
\draw[thick,   ->>](E) edge[right] (B);
\draw[thick,   ->>](E) edge[right] (C);
\end{tikzpicture}
\captionsetup{labelformat=empty}
\caption{Group 2}
\end{figure}
 \begin{figure}[H]
\begin{tikzpicture}[scale=1.1]
\draw[red!30, fill, rounded corners] (0.6, 0.6) rectangle (3.4 ,2.4);
 \foreach \x in {1,2,3}
 \foreach \y in {1,2}
    \fill (\x,\y) circle (1mm);
\node (A) at (1,2) {};
\node (B) at (2,2) {};
\node (C) at (3,2) {};
\node (D) at (1,1) {};
\node (E) at (2,1) {};
\node (F) at (3,1) {};

\draw[thick,   ->>] (A) edge (B);
\draw[thick,   ->>](D) edge (E);
\draw[thick,   ->>](D) edge (A);
\draw[thick,   ->>](D) edge (B);
\draw[thick,   ->>](D) edge[bend right] (F);
\end{tikzpicture}
\begin{tikzpicture}[scale=1.1]
\draw[red!30, fill, rounded corners] (0.6, 0.6) rectangle (3.4 ,2.4);
 \foreach \x in {1,2,3}
 \foreach \y in {1,2}
    \fill (\x,\y) circle (1mm);
\node (A) at (1,2) {};
\node (B) at (2,2) {};
\node (C) at (3,2) {};
\node (D) at (1,1) {};
\node (E) at (2,1) {};
\node (F) at (3,1) {};

\draw[thick,   ->>] (A) edge (B);
\draw[thick,   ->>](D) edge (E);
\draw[thick,   ->>](D) edge (A);
\draw[thick,   ->>](E) edge (F);
\draw[thick,   ->>](D) edge (B);
\draw[thick,   ->>](D) edge[bend right] (F);
\end{tikzpicture}
\begin{tikzpicture}[scale=1.1]
\draw[red!30, fill, rounded corners] (0.6, 0.6) rectangle (3.4 ,2.4);
 \foreach \x in {1,2,3}
 \foreach \y in {1,2}
    \fill (\x,\y) circle (1mm);
\node (A) at (1,2) {};
\node (B) at (2,2) {};
\node (C) at (3,2) {};
\node (D) at (1,1) {};
\node (E) at (2,1) {};
\node (F) at (3,1) {};
\draw[thick,   ->>] (A) edge (B);
\draw[thick,   ->>](D) edge (E);
\draw[thick,   ->>] (D) edge (C);
\draw[thick,   ->>](D) edge (A);
\draw[thick,   ->>](D) edge (B);
\draw[thick,   ->>](D) edge[bend right] (F);
\end{tikzpicture}
\begin{tikzpicture}[scale=1.1]
\draw[red!30, fill, rounded corners] (0.6, 0.6) rectangle (3.4 ,2.4);
 \foreach \x in {1,2,3}
 \foreach \y in {1,2}
    \fill (\x,\y) circle (1mm);
\node (A) at (1,2) {};
\node (B) at (2,2) {};
\node (C) at (3,2) {};
\node (D) at (1,1) {};
\node (E) at (2,1) {};
\node (F) at (3,1) {};
\draw[thick,   ->>] (A) edge (B);
\draw[thick,   ->>](D) edge (E);
\draw[thick,   ->>] (D) edge (C);
\draw[thick,   ->>](D) edge (A);
\draw[thick,   ->>](E) edge (F);
\draw[thick,   ->>](D) edge (B);
\draw[thick,   ->>](D) edge[bend right] (F);
\end{tikzpicture}
\captionsetup{labelformat=empty}
\caption{Group 3}
\end{figure}
 \begin{figure}[H]
 \begin{tikzpicture}[scale=1.1]
\draw[red!30, fill, rounded corners] (0.6, 0.6) rectangle (3.4 ,2.4);
 \foreach \x in {1,2,3}
 \foreach \y in {1,2}
    \fill (\x,\y) circle (1mm);
\node (A) at (1,2) {};
\node (B) at (2,2) {};
\node (C) at (3,2) {};
\node (D) at (1,1) {};
\node (E) at (2,1) {};
\node (F) at (3,1) {};
\draw[thick,   ->>](D) edge (E);
\draw[thick,   ->>](D) edge (A);
\draw[thick,   ->>](D) edge (B);
\draw[thick,   ->>](D) edge[bend right] (F);
\end{tikzpicture}
 \begin{tikzpicture}[scale=1.1]
\draw[red!30, fill, rounded corners] (0.6, 0.6) rectangle (3.4 ,2.4);
 \foreach \x in {1,2,3}
 \foreach \y in {1,2}
    \fill (\x,\y) circle (1mm);
\node (A) at (1,2) {};
\node (B) at (2,2) {};
\node (C) at (3,2) {};
\node (D) at (1,1) {};
\node (E) at (2,1) {};
\node (F) at (3,1) {};
\draw[thick,   ->>](D) edge (E);
\draw[thick,   ->>](D) edge (A);
\draw[thick,   ->>](E) edge (F);
\draw[thick,   ->>](D) edge (B);
\draw[thick,   ->>](D) edge[bend right] (F);
\end{tikzpicture}
\captionsetup{labelformat=empty}
\caption{Group 4}
\end{figure}

 \begin{figure}[H]
  \begin{tikzpicture}[scale=1.1]
\draw[red!30, fill, rounded corners] (0.6, 1.6) rectangle (3.4 ,2.4);

\draw[red!30, fill, rounded corners] (0.6, 1.4) rectangle (3.4 ,0.6);
 \foreach \x in {1,2,3}
 \foreach \y in {1,2}
    \fill (\x,\y) circle (1mm);
\node (A) at (1,2) {};
\node (B) at (2,2) {};
\node (C) at (3,2) {};
\node (D) at (1,1) {};
\node (E) at (2,1) {};
\node (F) at (3,1) {};

\draw[thick,   ->>] (A) edge (B);
\draw[thick,   ->>](D) edge (E);
\draw[thick,   ->>](D) edge[bend right] (F);
\end{tikzpicture}
 \begin{tikzpicture}[scale=1.1]
\draw[red!30, fill, rounded corners] (0.6, 1.6) rectangle (3.4 ,2.4);

\draw[red!30, fill, rounded corners] (0.6, 1.4) rectangle (3.4 ,0.6);
 \foreach \x in {1,2,3}
 \foreach \y in {1,2}
    \fill (\x,\y) circle (1mm);
\node (A) at (1,2) {};
\node (B) at (2,2) {};
\node (C) at (3,2) {};
\node (D) at (1,1) {};
\node (E) at (2,1) {};
\node (F) at (3,1) {};

\draw[thick,   ->>] (A) edge[bend left] (C);
\draw[thick,   ->>] (A) edge (B);
\draw[thick,   ->>](D) edge (E);
\draw[thick,   ->>](E) edge (F);
\draw[thick,   ->>](D) edge[bend right] (F);
\end{tikzpicture}
\captionsetup{labelformat=empty}
\caption{Group 5}
\end{figure}
In contrast to this, a model structure with a saturated transfer system as $\AF$ may, after localization, result in an model structure with an unsaturated transfer system as its acyclic fibrations.
\begin{example}
Consider the model structure with saturated acyclic fibrations below.
    \begin{figure}[H]
\begin{tikzpicture}[scale=1.1]

\draw[red!30, fill, rounded corners] (0.6, 0.6) rectangle (3.4, 1.4); 
\draw[red!30, fill, rounded corners] (1.6, 1.6) rectangle (3.4, 2.4); 
\draw[red!30, fill, rounded corners]  (0.6, 0.6)rectangle (1.4,2.4); 
 \foreach \x in {1,2,3}
 \foreach \y in {1,2}
    \fill (\x,\y) circle (1mm);
\node (A) at (1,2) {};
\node (B) at (2,2) {};
\node (C) at (3,2) {};
\node (D) at (1,1) {};
\node (E) at (2,1) {};
\node (F) at (3,1) {};

\draw[thick, ->>](D) edge (E);
\draw[thick,  ->>](D) edge (A);
\draw[thick, ->>](E) edge (F);
\draw[thick,  ->>](D) edge[bend right] (F);

\end{tikzpicture}

\end{figure}
Right localizing at any arrow, short or otherwise, produces an unsaturated set of acyclic fibrations.
    \begin{figure}[H]
\begin{tikzpicture}[scale=1.1]
\draw[red!30, fill, rounded corners] (0.6, 0.6) rectangle (3.4 ,2.4);
 \foreach \x in {1,2, 3}
 \foreach \y in {1,2}
    \fill (\x,\y) circle (1mm);
\node (A) at (1,2) {};
\node (B) at (2,2) {};
\node (C) at (3,2) {};
\node (D) at (1,1) {};
\node (E) at (2,1) {};
\node (F) at (3,1) {};

\draw[thick, ->>] (A) edge[bend left] (C);
\draw[thick, ->>] (A) edge (B);
\draw[thick, ->>](D) edge (E);
\draw[thick, ->>] (D) edge (C);
\draw[thick, ->>](D) edge (A);
\draw[thick, ->>](E) edge (F);
\draw[thick, ->>](D) edge (B);
\draw[thick, ->>](D) edge[bend right] (F);
\draw[thick, ->>](E) edge[right] (B);
\draw[thick, ->>](F) edge[right] (C);
\draw[thick, ->>](E) edge[right] (C);
\end{tikzpicture}
\end{figure}
\end{example} 
It turns out that this is indeed a part of a general phenomenon.

 \begin{proposition}\label{prop:unsat}
 Let $(\W,\AF)$ be a model structure on a lattice $P$ where $\AF$ is an unsaturated transfer system. Then for any right localization $R_f(\W,\AF)$, the transfer system $R_f(\AF)$ is also unsaturated. 
 \end{proposition}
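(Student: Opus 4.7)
The plan is to use the unsaturation witness directly and show it persists under right localization. Assume $\AF$ is unsaturated, so there exist $x \leq y \leq z$ in $P$ with $x \to y, x \to z \in \AF$ but $y \to z \notin \AF$. The goal is to verify that this same triple witnesses unsaturation of $R_f(\AF)$.

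First, I would observe that $y \to z$ must be a weak equivalence in the original model structure. Indeed, in the lattice $P$ we have the factorization $x \to z = (y \to z) \circ (x \to y)$, and since $x \to y, x \to z \in \AF \subseteq \W$, the two-out-of-three axiom MC2 forces $y \to z \in \W$ as well. Next, since $y \to z \in \W$ but $y \to z \notin \AF = \W \cap \F$, it must be the case that $y \to z \notin \F$, i.e.\ $y \to z$ is a weak equivalence that is not a fibration in the original model structure.

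Now I would invoke the standard fact about right Bousfield localization recalled in Section \ref{sec:bousfieldloc}, namely $R_f(\F) = \F$; right localization does not alter the class of fibrations. Consequently $y \to z \notin \F = R_f(\F)$, and so $y \to z \notin R_f(\AF) = R_f(\W) \cap R_f(\F)$. On the other hand, because $R_f(\AF) \supseteq \AF$, both $x \to y$ and $x \to z$ remain acyclic fibrations after localization. Thus the same configuration $x \leq y \leq z$ witnesses unsaturation of $R_f(\AF)$, completing the proof.

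I do not anticipate any real obstacle here; the argument is essentially a two-line consequence of MC2 combined with the identity $R_f(\F) = \F$. The only place to be slightly careful is ensuring that the relation $x \to z = (y \to z) \circ (x \to y)$ indeed makes sense as a decomposition in the lattice (which is immediate, since all morphisms between comparable elements are unique), and that the two-out-of-three step is applied to the correct triple.
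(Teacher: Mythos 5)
Your proposal is correct and mirrors the paper's own argument almost exactly: both identify the witnessing triple, use two-out-of-three to conclude $y \to z \in \W \setminus \F$, and invoke $R_f(\F) = \F$ together with $R_f(\AF) \supseteq \AF$ to carry the witness forward. Your write-up is in fact slightly more explicit than the paper's, which leaves the two-out-of-three step implicit in its diagram.
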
 
    \begin{proof}
    Let $\AF$ be an unsaturated transfer system that is part of a model structure on a lattice $P$ with weak equivalences $\W$.
    This means that there is a triangle of morphisms in $\W$
\[
\xymatrix{
x \ar@{->>}[rr]^\sim \ar@{->>}[dr]_\sim &   & z \\
& y \ar[ur]_\sim & 
}
\]
where $y \rightarrow z$ is not a fibration, witnessing unsaturation.
As this map is already a weak equivalence before localization, it will be one after localization too. 
Furthermore, fibrations before and after right localization agree, so $y \rightarrow z$ will not be a fibration post right localization, whereas $x \rightarrow z$ and $x \rightarrow y$ will of course be in $R_f(\AF)$.
Therefore the same triangle of morphisms is a witness for unsaturation in $R_f(\AF)$.
 \end{proof}

The following observation gives another interesting connection between saturation and model structures.

\begin{lemma}\label{lem:wequalst}
Let $T$ be a saturated transfer system on a finite lattice $P$. Then $(\W,\AF)=(T,T)$ is a model structure on $P$.
\end{lemma}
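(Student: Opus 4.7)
The plan is to invoke Theorem \ref{thm:tmin} with the candidate weak equivalence class $\W = T$, reading $T$ itself as the transfer system of acyclic fibrations. For this application, I need two things: that $T$ legitimately occurs as the weak equivalences of some model structure on $P$, and that $T_{min}(\W) \subseteq T$ for the minimal transfer system $T_{min}$ attached to $\W = T$. The second condition is immediate because $T_{min}$ is contained in $\W$ by construction (see Remark \ref{rem:tminconstruct}), and here $\W = T$. So the real work is verifying the first condition.

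For that, I would use Theorem \ref{thm:legalw}, whose hypothesis asks $T$ to be decomposable and to satisfy a pushout/pullback condition on short arrows. The decomposability argument runs as follows: if $f \colon x \to y$ lies in $T$ and factors through some intermediate $z$ with $x \leq z \leq y$, then the pullback of $f$ along $z \to y$ in the lattice is $x \to z$ (since $x \wedge z = x$), so $x \to z \in T$ by pullback closure. The remark preceding this lemma notes that on a transfer system saturation is equivalent to the two-out-of-three property, and applying the left cancellation case of two-out-of-three to $x \to z, x \to y \in T$ gives $z \to y \in T$. Hence $T$ is decomposable.

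The pushout/pullback condition in Theorem \ref{thm:legalw} is then automatic: since $T$ is a transfer system, every short edge $\sigma \in T$ already has all of its pullbacks in $T$, so we may always choose the pullback option, and for any factorization of a morphism in $T$ into short arrows we simply take $k = 0$. Theorem \ref{thm:legalw} then guarantees that $T$ is the weak equivalence class of some model structure on $P$, completing the verification of the first condition, and Theorem \ref{thm:tmin} produces the model structure $(T,T)$.

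The main conceptual point---which is where I anticipate the only real friction---is recognizing that the three closure properties of a saturated transfer system (closure under composition as a subcategory, closure under pullbacks, and saturation) translate cleanly into the two-out-of-three and decomposability conditions required of a candidate weak equivalence class in a finite lattice. Once that dictionary is in place, the existing structural theorems \ref{thm:legalw} and \ref{thm:tmin} do the heavy lifting and no additional combinatorial input is needed.
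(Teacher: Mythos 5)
Your proof is correct, but it takes a genuinely different route from the paper's. The paper's argument is much shorter: it invokes a lifting-theoretic criterion, namely \cite[Theorem 4.5]{MORSVZ2}, which says that the candidate structure yields a model structure if and only if ${}^\boxslash T \cap \W$ is closed under pushouts, and then observes that with $\W = T$ one has ${}^\boxslash T \cap T$ consisting only of identities (a morphism that lifts against itself is an isomorphism, hence an identity in a poset), which is trivially closed under pushouts. Your argument instead routes through Theorem~\ref{thm:legalw} to certify $T$ as a legal weak equivalence class and then through Theorem~\ref{thm:tmin} via the sandwich $T_{min} \subseteq T \subseteq \W = T$; the decomposability check (pullback closure gives one factor, saturation gives the other) and the choice $k = 0$ in the factorization condition are both correct, and the inclusion $T_{min} \subseteq \W$ does indeed come for free from the construction in Remark~\ref{rem:tminconstruct}. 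What your version buys is transparency: it makes visible exactly how the three closure properties of a saturated transfer system map onto the two-out-of-three and decomposability requirements on weak equivalences, whereas the paper's version buys brevity at the cost of delegating the whole verification to the quoted criterion and a one-line triviality observation. One small remark: you frame the pullback-closure step and the saturation step as two halves of a single argument, but it is worth noticing that the pullback-closure half holds for any transfer system — it is only the ``left cancellation'' half that genuinely uses saturation, which is precisely why a non-saturated transfer system cannot serve as its own $\W$.
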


\begin{proof}
First note that because $T$ is saturated, it is closed under two-out-of-three axiom. 
Recall that ${}^\boxslash T$ denotes the set of morphisms in $P$ that have the left lifting property with respect to $T$. Therefore, we need to verify that $(T, {}^\boxslash T)$ forms a model structure where the fibrations are all morphisms with the right lifting property with respect to ${}^\boxslash T \cap W$.
By \cite[Theorem 4.5]{MORSVZ2}, the above construction yields a model structure if and only if ${}^\boxslash T \cap W$ is closed under pushouts. However, in our case
$${}^\boxslash T \cap W = {}^\boxslash T \cap T$$ is trivial as we are working in a poset.
\end{proof}

Lemma \ref{lem:wequalst} plays an important role in the next theorem, where we prove that every model structure \emph{with saturated acyclic fibrations} can be obtained via localizing the trivial model structure.
To do this we require the following construction \cite[Proposition 4.4]{HMOO}, which describes how to obtain any saturated transfer system on $[m]\times [n+1]$ from a unique smaller one, and vice versa.

To be more precise, Theorem 4.1 of \cite{HMOO} says that the data of a transfer system on $[m] \times [n+1]$ is equivalent to the data of a transfer system on $A \times [n]$ for $A \subset [m]$ together with the information of how $A$ is included in $[m]$. We will give the procedure as well as an example below.

\begin{construction}\label{con:saturated}
Let $A \subset [m]$ be a subposet, and $\left<S\right>$ be a saturated transfer system on $A \times [n]$. This will play the role of the ``smaller'' transfer system in $[m]\times [n+1]$.

Note that every transfer system on $A \times [n]$ gives rise to a transfer system on the grid $[|A|]\times [n]$ by simply ignoring the columns not in $A$. 
However, while a transfer system on $[l] \times [n]$ with $l \leq m$ can be considered as a transfer system on $A \times [n]$ for some $A$ with $|A|=l$, there is not necessarily unique way of doing so. 
Therefore, the information of how $A$ is a subposet of $[m]$ is indeed vital for this bijection. 
As every saturated transfer system is uniquely determined by its generating short arrows, see Remark \ref{rem:3outof4}, we will describe the process only in terms of those. 

{\bf From smaller to bigger:}
We explain how to make a bigger saturated transfer system out of a smaller one.
Given a saturated transfer system $\left<S\right>$ on $A \times [n]$, we add the following short arrows as generators.
\begin{enumerate}
\item Include $S$ into the $m \times (n+1)$-grid in the following way.
\begin{enumerate}
\item The $y$-coordinates of $A \times [n]$ are using the canonical inclusion $[n] \rightarrow [n+1]$. 
\item If every short horizontal arrow in $A \times [n]$ is of the form $(i,j) \rightarrow (i+1,j)$, then we also use the canonical inclusion $A \rightarrow [m]$ on the $x$-coordinates. 
\item If there are short horizontal arrows $(i,j) \rightarrow (t,j)$ for some $i \neq t-1$ in $A \times [n]$, include those as $(t-1,j) \rightarrow (t,j) \in [m] \times [n+1]$. 
\end{enumerate}
\item If (1b), skip to Step (4).
\item If (1c) and there is at least one short arrow $(i,j) \rightarrow (t,j)$ in $A \times [n]$ for $i \neq t-1$, duplicate the column with $x$-coordinate $i$ in $[m] \times [n+1]$ to the columns with $x$-coordinates $i, \ldots, t-1$. Then, add the horizontal short arrows $(x,j) \rightarrow (x+1,j)$ for $x=i$ and $0 \leq j \leq n$.  
\item Add the horizontal short arrows $(i-1,j) \rightarrow (i,j)$ for all $i \notin A$, $i>\min([m]\backslash A)$ and all $j \leq n+1$. 
\item For $i=0,\ldots,k$, where $k+1=\min([m]\backslash A)$, add the top vertical edges $(i,n) \rightarrow (i, n+1)$.
\item If the three-out-of-four condition of a little square requires it, fill in the top horizontal edges $(i-1,n+1) \rightarrow (i,n+1)$ between the arrows added in the previous step.
\end{enumerate}

\begin{remark}\label{rem:iota}
We could have equivalently described Construction \ref{con:saturated} as follows.
We now use Step 1 to define an inclusion $\iota: A \rightarrow [m]$.
\begin{itemize}
\item If we are in the case (1b), then $\iota$ is the canonical inclusion.
\item If we are in the case (1c) and there is a short horizontal arrow $(i,j) \rightarrow (t,j)$ and $i \neq t-1$, then we set $\iota(i)=t-1$. If for $i$ there is no such short arrow, we set $\iota(i)=i$. 
\end{itemize}
Instead of using the canonical inclusion in (1a) and then the ``thickening'' described in (1c), we could have included the components of the smaller transfer system $S$ using $\iota$ instead and then thickened each component to the left. 
\end{remark} 
The next example illustrates this construction. 
\begin{example}
 Let $n=2$, $m=5$ and $A$ be the subset $\{0,1,3,4\}$ of $[m]$.
Suppose we have the saturated transfer system $S$ on $A \times [n]$ as follows.
 \begin{figure}[H]
 \begin{tikzpicture}[scale=1.1]
 \foreach \x in {1,2,3,4}
 \foreach \y in {1,2}
  \fill (\x,\y) circle (1mm);
  \node (A) at (1,1)[label=below:$0$]{};
    \node (B) at (1,2) {};
    \node (C) at (2,1) [label=below:$1$]{};
    \node (D) at (2,2) {}; 
    \node (E) at (3,1)[label=below:$3$]{}; 
    \node (F) at (3,2) {}; 
    \node (G) at (4,1)[label=below:$4$]{}; 
\draw[line width=.5mm, black,-{stealth}]  (A) edge[left] (B);
\draw[line width=.5mm, black, -{stealth}]  (C) edge[left] (D);
\draw[line width=.5mm, black, -{stealth}]  (C) edge[left] (E);
 \end{tikzpicture}
\end{figure}
Now given the steps of the construction, note that we are case 1c) so we perform the appropriate inclusion $\iota$ from $[m]\times[n+1]$ to where $0 \mapsto 0$, $1 \mapsto 2$, $3 \mapsto 3$ and $4 \mapsto 4$. 
The result is the following.
 \begin{figure}[H]
 \begin{tikzpicture}[scale=1.1]
 \foreach \x in {1,2,3,4,5,6}
 \foreach \y in {1,2,3}
  \fill (\x,\y) circle (1mm);
    \node (A) at (1,1)[label=below:$0$]{};
    \node (B) at (1,2) {};
    \node (S) at (2,1)[label=below:$1$]{};
    \node (C) at (3,1)[label=below:$2$]{};
    \node (D) at (3,2) {}; 
    \node (E) at (4,1)[label=below:$3$]{}; 
    \node (T) at (5,1)[label=below:$4$]{};
    \node (R) at (6,1)[label=below:$5$]{};
\draw[line width=.5mm, black,-{stealth}]  (A) edge[left] (B);
\draw[line width=.5mm, black, -{stealth}]  (C) edge[left] (D);
\draw[line width=.5mm, black, -{stealth}]  (C) edge[left] (E);
 \end{tikzpicture}
\end{figure}
Now we perform Step 3) to get the following image.
 \begin{figure}[H]
 \begin{tikzpicture}[scale=1.1]
 \foreach \x in {1,2,3,4,5,6}
 \foreach \y in {1,2,3}
  \fill (\x,\y) circle (1mm);
    \node (A) at (1,1) {};
    \node (B) at (1,2) {};
    \node (F) at (2,2) {};
    \node (G) at (2,1) {};
    \node (C) at (3,1) {};
    \node (D) at (3,2) {}; 
    \node (E) at (4,1) {}; 
\draw[line width=.5mm, black,-{stealth}]  (A) edge[left] (B);
\draw[line width=.5mm, black, -{stealth}]  (C) edge[left] (D);
\draw[line width=.5mm, black, -{stealth}]  (C) edge[left] (E);

  \draw[line width=.7mm, orange, -{stealth}] (G) edge[left] (F);
  \draw[line width=.7mm, red, -{stealth}] (G) edge[left] (C);
   \draw[line width=.7mm, red, -{stealth}] (F) edge[left] (D);
 \end{tikzpicture}
\end{figure}
Step 4) imposes that we include the arrows in pink between the two rightmost columns, and Step 5) imposed we include the arrows in purple between the two top rows.
 \begin{figure}[H]
 \begin{tikzpicture}[scale=1.1]
 \foreach \x in {1,2,3,4,5,6}
 \foreach \y in {1,2,3}
  \fill (\x,\y) circle (1mm);
    \node (A) at (1,1) {};
    \node (B) at (1,2) {};
    \node (X) at (1,3) {};
    \node (Y) at (2,3) {};
    \node (F) at (2,2) {};
    \node (G) at (2,1) {};
    \node (C) at (3,1) {};
    \node (D) at (3,2) {}; 
    \node (E) at (4,1) {}; 
     \node (Z) at (5,1) {}; 
     \node (J) at (4,2) {};
     \node (K) at (5,2) {};
      \node (O) at (4,3) {};
    \node (P) at (5,3) {};
    \node (H) at (6,1) {};
      \node (I) at (6,2) {};
    \node (J) at (6,3) {};
    
\draw[line width=.5mm, black,-{stealth}]  (A) edge[left] (B);
\draw[line width=.5mm, black, -{stealth}]  (C) edge[left] (D);
\draw[line width=.5mm, black, -{stealth}]  (C) edge[left] (E);

  \draw[line width=.7mm, orange, -{stealth}] (G) edge[left] (F);
  \draw[line width=.7mm, red, -{stealth}] (G) edge[left] (C);
   \draw[line width=.7mm, red, -{stealth}] (F) edge[left] (D);

 \draw[line width=.7mm, pink, -{stealth}] (Z) edge[left] (H);
  \draw[line width=.7mm, pink, -{stealth}] (K) edge[left] (I);
 \draw[line width=.7mm, pink, -{stealth}] (P) edge[left] (J);

   \draw[line width=.7mm, purple, -{stealth}] (B) edge[left] (X);
   \draw[line width=.7mm, purple, -{stealth}] (F) edge[left] (Y);
 \end{tikzpicture}
\end{figure}
Hence the resulting transfer system on $[m] \times [n+1]$ is the following.
 \begin{figure}[H]
 \begin{tikzpicture}[scale=1.1]
 \foreach \x in {1,2,3,4,5,6}
 \foreach \y in {1,2,3}
  \fill (\x,\y) circle (1mm);
    \node (A) at (1,1) {};
    \node (B) at (1,2) {};
    \node (X) at (1,3) {};
    \node (Y) at (2,3) {};
    \node (F) at (2,2) {};
    \node (G) at (2,1) {};
    \node (C) at (3,1) {};
    \node (D) at (3,2) {}; 
    \node (E) at (4,1) {}; 
     \node (Z) at (5,1) {}; 
     \node (J) at (4,2) {};
     \node (K) at (5,2) {};
      \node (O) at (4,3) {};
    \node (P) at (5,3) {};
       \node (H) at (6,1) {};
      \node (I) at (6,2) {};
    \node (J) at (6,3) {};
    
\draw[line width=.5mm, black,-{stealth}]  (A) edge[left] (B);
\draw[line width=.5mm, black, -{stealth}]  (C) edge[left] (D);
\draw[line width=.5mm, black, -{stealth}]  (C) edge[left] (E);

  \draw[line width=.7mm, black, -{stealth}] (G) edge[left] (F);
  \draw[line width=.7mm, black, -{stealth}] (G) edge[left] (C);
   \draw[line width=.7mm, black, -{stealth}] (F) edge[left] (D);

 \draw[line width=.7mm, black, -{stealth}] (Z) edge[left] (H);
  \draw[line width=.7mm, black, -{stealth}] (K) edge[left] (I);
 \draw[line width=.7mm, black, -{stealth}] (P) edge[left] (J);

   \draw[line width=.7mm, black, -{stealth}] (B) edge[left] (X);
   \draw[line width=.7mm, black, -{stealth}] (F) edge[left] (Y);
 \end{tikzpicture}
\end{figure}
Notice that we did not perform Step 6) as it was not applicable in this setting. 
\end{example}

{\bf From bigger to smaller:}
As this construction forms part of a bijection, we have an inverse method for constructing a smaller transfer system from a larger one. 
So given a saturated transfer system $T$ on $[m] \times [n+1]$, we obtain a subset $A$ of $[m]$ and a transfer system $\left<S\right>$ on $A \times [n]$ as follows.

Consider the transfer system from the previous example. We will unravel the inverse construction. 
\begin{example} Consider the transfer system below on the lattice $[5] \times [2]$.

 \begin{figure}[H]
 \begin{tikzpicture}[scale=1.1]
 \foreach \x in {1,2,3,4,5,6}
 \foreach \y in {1,2,3}
  \fill (\x,\y) circle (1mm);
    \node (A) at (1,1)[label=below:$0$]{};
    \node (B) at (1,2) {};
    \node (X) at (1,3) {};
    \node (Y) at (2,3) {};
    \node (F) at (2,2) {};
    \node (G) at (2,1)[label=below:$1$]{};
    \node (C) at (3,1)[label=below:$2$]{};
    \node (D) at (3,2) {}; 
    \node (E) at (4,1)[label=below:$3$]{}; 
     \node (Z) at (5,1)[label=below:$4$]{}; 
     \node (J) at (4,2) {};
     \node (K) at (5,2) {};
      \node (O) at (4,3) {};
    \node (P) at (5,3) {};
       \node (H) at (6,1)[label=below:$5$]{};
      \node (I) at (6,2) {};
    \node (J) at (6,3) {};
    
\draw[line width=.5mm, black,-{stealth}]  (A) edge[left] (B);
\draw[line width=.5mm, black, -{stealth}]  (C) edge[left] (D);
\draw[line width=.5mm, black, -{stealth}]  (C) edge[left] (E);

  \draw[line width=.7mm, black, -{stealth}] (G) edge[left] (F);
  \draw[line width=.7mm, black, -{stealth}] (G) edge[left] (C);
   \draw[line width=.7mm, black, -{stealth}] (F) edge[left] (D);

 \draw[line width=.7mm, black, -{stealth}] (Z) edge[left] (H);
  \draw[line width=.7mm, black, -{stealth}] (K) edge[left] (I);
 \draw[line width=.7mm, black, -{stealth}] (P) edge[left] (J);

   \draw[line width=.7mm, black, -{stealth}] (B) edge[left] (X);
   \draw[line width=.7mm, black, -{stealth}] (F) edge[left] (Y);
 \end{tikzpicture}
\end{figure}

Our goal is to construct a transfer system on $A\times[1]$ for some $A \subseteq [5]$, so our first step is to remove the top row $[5]\times \{2\}$ of the grid and any arrows mapping to the nodes. 
The result is the next image.
\begin{figure}[H]
 \begin{tikzpicture}[scale=1.1]
 \foreach \x in {1,2,3,4,5,6}
 \foreach \y in {1,2}
  \fill (\x,\y) circle (1mm);
    \node (A) at (1,1) {};
    \node (B) at (1,2) {};
    \node (X) at (1,3) {};
    \node (Y) at (2,3) {};
    \node (F) at (2,2) {};
    \node (G) at (2,1) {};
    \node (C) at (3,1) {};
    \node (D) at (3,2) {}; 
    \node (E) at (4,1) {}; 
     \node (Z) at (5,1) {}; 
     \node (J) at (4,2) {};
     \node (K) at (5,2) {};
      \node (O) at (4,3) {};
    \node (P) at (5,3) {};
    \node (I) at (6,1) {};
    \node (J) at (6,2) {};
\draw[line width=.5mm, black,-{stealth}]  (A) edge[left] (B);
\draw[line width=.5mm, black, -{stealth}]  (C) edge[left] (D);
\draw[line width=.5mm, black, -{stealth}]  (C) edge[left] (E);
  \draw[line width=.7mm, black, -{stealth}] (G) edge[left] (F);
  \draw[line width=.7mm, black, -{stealth}] (G) edge[left] (C);
   \draw[line width=.7mm, black, -{stealth}] (F) edge[left] (D);
 \draw[line width=.7mm, black, -{stealth}] (Z) edge[left] (I);
  \draw[line width=.7mm, black, -{stealth}] (K) edge[left] (J);
 \end{tikzpicture}
\end{figure}
Then we collapse neighbouring duplicate columns to the right along parallel horizontal arrows, which gives rise to the following diagram.
 \begin{figure}[H]
 \begin{tikzpicture}[scale=1.1]
 \foreach \x in {1,2,3,4}
 \foreach \y in {1,2}
  \fill (\x,\y) circle (1mm);

    \node (A) at (1,1)[label=below:$0$]{};
    \node (B) at (1,2) {};
    \node (C) at (2,1)[label=below:$1$]{};
    \node (D) at (2,2) {}; 
    \node (E) at (3,1)[label=below:$3$]{}; 
    \node (F) at (3,2) {}; 
    \node (G) at (4,1)[label=below:$4$]{}; 
\draw[line width=.5mm, black,-{stealth}]  (A) edge[left] (B);
\draw[line width=.5mm, black, -{stealth}]  (C) edge[left] (D);
\draw[line width=.5mm, black, -{stealth}]  (C) edge[left] (E);
 \end{tikzpicture}
\end{figure}
We retain the rightmost labeling from each collapsed block of columns. 
So the labels in this picture are $A=\{0,1,3,4\}$. 
\end{example}
The synopsis of the example is the following. 
Given $T$, we remove the top row $[m] \times \{n+1\}$, as well as any arrows whose target is in the top row.
Then, if there are any neighboring columns that are duplicates of each other, we collapse them onto the rightmost of those columns. We give the newly collapsed column the $x$-coordinate of the leftmost column from before collapsing. In other words, the leftmost coordinate is the one to be taken into $A$. We are then left with a saturated transfer system on $A \times [n]$.

\end{construction}

We can finally prove our structural result on how model structures with saturated acyclic fibrations are linked via localizations.

\begin{theorem}\label{thm:saturatedzigzag}
Let $(\W,\AF)$ be a model structure on $[m] \times [n+1]$ where $\AF$ is a saturated transfer system. Then this model structure can be achieved by applying a sequence of left and right localizations to the trivial model structure.

\end{theorem}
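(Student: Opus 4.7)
The plan is to proceed in two stages. First, I reduce to the case $\W = \AF$ by performing left Bousfield localizations. Then I build up $(\AF, \AF)$ from the trivial model structure via a sequence of right Bousfield localizations, inducting on the number of short-arrow generators of $\AF$.

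\emph{Stage 1.} Since $\AF$ is saturated, Lemma \ref{lem:wequalst} gives that $(\AF, \AF)$ is itself a model structure on $[m] \times [n+1]$. Left Bousfield localization preserves acyclic fibrations while enlarging the weak equivalences, so I would iteratively left-localize $(\AF, \AF)$ at each short arrow in $\W \setminus \AF$. By the universal property of left localization the resulting weak equivalences remain contained in $\W$, and since on a finite lattice the decomposable closure of the added short arrows coincides with $\W$ after finitely many steps, this produces $(\W, \AF)$. This reduces the theorem to showing that $(\AF, \AF)$ is obtained from the trivial model structure by a sequence of right Bousfield localizations.

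\emph{Stage 2.} I would induct on the number of short-arrow generators of the saturated transfer system $\AF$, the base case being the trivial transfer system, in which case $(\AF, \AF)$ is already the trivial model structure. For the inductive step, enumerate the short-arrow generators of $\AF$ as $f_1,\ldots,f_k$ in some order and let $T_i$ denote the smallest saturated transfer system containing $f_1,\ldots,f_i$. Each $(T_i, T_i)$ is a model structure by Lemma \ref{lem:wequalst}, and the goal is to exhibit $(T_i, T_i)$ as $R_{f_i}(T_{i-1}, T_{i-1})$. Iterating then yields $(\AF, \AF) = (T_k, T_k)$ from the trivial model structure.

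\emph{Main obstacle.} The technical crux is the following key claim: for a saturated transfer system $T$ and a short arrow $f$, the right localization $R_f(T, T)$ equals $(T', T')$ where $T' := \langle T \cup \{f\}\rangle_{\mathrm{sat}}$ is the saturated closure of $T \cup \{f\}$. By Theorem \ref{thm:golden} the new acyclic fibrations are $\langle T \cup \Gamma_f\rangle$, and I would show this equals $T'$ in two directions. For $\langle T \cup \Gamma_f\rangle \subseteq T'$, I would show that each Golden Arrow lies in $T'$ by combining the maximality defining $\Gamma_f$ with the three-out-of-four characterization of saturated transfer systems in Remark \ref{rem:3outof4}, which forces the needed short arrows to appear in $T'$. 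For the reverse containment, I would verify that each short arrow forced into $T'$ by saturation arises either as a pullback of an arrow in $T$ or as a Golden Arrow associated with a merging of weak equivalence classes. To conclude that the new weak equivalences equal $T'$ as well, I would use the fact from the proof of Lemma \ref{lem:wequalst} that the acyclic cofibrations in a model structure of the form $(T, T)$ on a poset are trivial, so the new weak equivalences coincide with the new acyclic fibrations. The explicit computations in Section \ref{sec:rightloc} on small grids provide templates, but the general verification that Golden Arrows together with pullback and composition closure precisely produce the saturated closure is the main technical obstacle.
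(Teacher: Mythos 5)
Your approach is genuinely different from the paper's, and I believe it is correct, though the key claim you identify as ``the main technical obstacle'' needs to be completed. Let me compare the two routes and then indicate why your key claim is in fact provable, and more easily than your sketch suggests.

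The paper inducts on the grid size $[m]\times[n+1]$, using Construction \ref{con:saturated} (the HMOO bijection between saturated transfer systems on $[m]\times[n+1]$ and those on subgrids $A\times[n]$) to reduce to a smaller grid, pushing the resulting chain of right localizations up via the inclusion $\iota$, and then performing further right localizations to fill in the remaining arrows. You instead induct on the number of short-arrow generators $f_1,\ldots,f_k$ of $\AF$, using that each $(T_i,T_i)$ is a model structure by Lemma \ref{lem:wequalst} and that $R_{f_i}(T_{i-1},T_{i-1})=(T_i,T_i)$. Your Stage 1 (peeling off left localizations at the short arrows of $\W\setminus\AF$ to reduce to the case $\W=\AF$) matches the final step of the paper's proof, and your argument that one indeed lands on $\W$ is sound because $\W$ is decomposable and therefore generated under composition by its short arrows. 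Your approach avoids Construction \ref{con:saturated} entirely, which makes it shorter, and it is potentially more portable, since the only grid-specific ingredient you use is that $\AF$ is generated by its short arrows; in fact, decomposability of a saturated transfer system (closure under pullback gives $x\to y$ whenever $x\le y\le z$ and $x\to z\in T$; saturation then gives $y\to z$) shows this on any finite lattice, so you don't even need Remark \ref{rem:3outof4}.

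For your key claim, that $R_f(T,T)=(T',T')$ with $T'=\langle T\cup\{f\}\rangle_{\mathrm{sat}}$, there is a cleaner argument than the one you sketch through Golden Arrows, and it closes the gap. Since $T$ is a transfer system, the acyclic cofibrations of $(T,T)$ are $T\cap{}^\boxslash T$, which in a poset consists only of identities, and hence $\F$ is everything. Right localization preserves $\F$ and $\AC$, so after localizing we still have $\AC=\{\id\}$ and $\F=\mathrm{all}$, whence $R_f(\W)=R_f(\AF)\circ R_f(\AC)=R_f(\AF)$. Now $R_f(\AF)$ is a transfer system by Proposition \ref{prop:transferaf}, and $R_f(\W)$ is two-out-of-three closed by MC2; since they coincide, $R_f(\W)$ is a saturated transfer system containing $T\cup\{f\}$, giving $R_f(\W)\supseteq T'$. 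For the reverse, Lemma \ref{lem:wequalst} makes $(T',T')$ a model structure with $\F=\mathrm{all}$ and $\W=T'\supseteq T\cup\{f\}$, so the universal property of right localization forces $R_f(\W)\subseteq T'$. This bypasses Theorem \ref{thm:golden} entirely, which is fortunate because the verification that $\langle T\cup\Gamma_f\rangle$ equals the saturated closure is the only place your sketch is incomplete, and going through the Golden Arrows requires already knowing $R_f(\W)$ in order to describe $\Gamma_f$, so it risks circularity unless handled carefully.
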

\begin{proof}
We prove the claim in the following steps.
\begin{enumerate}
\item Create a model structure on $[m] \times [n+1]$ with our desired acyclic fibrations $\AF$, and weak equivalences contained in $\W$ but not necessarily equal to $\W$. This is achieved in the following substeps:
\begin{enumerate}
\item Create a model structure $(T_A,T_A)$ on a smaller grid $A \times [n]$, $A \subset [m]$, where $T_A$ is the transfer system corresponding to $\AF$ in Construction \ref{con:saturated}.
\item Use $T_A$ to build a model structure $(\AF_\iota,\AF_\iota)$, where $\AF_\iota$ is exactly the transfer system resulting from applying Step (1) of Construction \ref{con:saturated} to $T_A$.
\item Right localize $(\AF_\iota,\AF_\iota)$ further, according to the remaining steps of Construction \ref{con:saturated}, to obtain the model structure $(\AF,\AF)$. 
\end{enumerate}
\item Apply left localization to the previous step, which enlarges the weak equivalences to $\W$ but leaves $\AF$ intact.
\end{enumerate}

So let us begin with creating a model structure with the desired acyclic fibrations and argue inductively using Construction \ref{con:saturated}. There is a unique subposet $A \subset [m]$ and saturated transfer system $T_A$ such that applying the smaller-to-bigger part of Construction \ref{con:saturated} creates the transfer system $\AF$. 

By Lemma \ref{lem:wequalst}, $(T_A,T_A)$ is a model structure on $A \times [n]$. By our induction hypothesis, $(T_A, T_A)$ can be obtained from the trivial model structure by left and right localization, i.e. there are short arrows $g_i \in A \times [n]$ such that
\[
(T_A, T_A) = (R_{g_t} \circ L_{g_{t-1}} \circ \cdots \circ R_{g_2} \circ L_{g_1})(Id, Id).
\]
Note that for this presentation, we also allow some of the $g_i$ to be the identity. But this is not the only simplification. Because $(T_A, T_A)$ is a model structure with its weak equivalences equal to its acyclic fibrations, its only acyclic cofibrations are identity morphisms. By definition, left localization increases the set of acyclic cofibrations, which can only mean that there are no nontrivial left localizations in the above presentation. This means that we can in fact write
\[
(T_A, T_A) = (R_{g_k} \circ R_{g_k-1} \circ \cdots \circ R_{g_2} \circ R_{g_1})(Id, Id).
\]

Let $\iota: A \times [n+1] \longrightarrow [m]\times[n+1]$ be the inclusion constructed in Remark \ref{rem:iota}.
We can now examine what the composition of localizations 
\[
R_{\iota(g_k)} \circ R_{\iota(g_{k-1})} \circ \cdots \circ R_{\iota(g_2)} \circ R_{\iota(g_1)}
\]
does to the trivial model structure on $[m] \times [n+1]$. We call the resulting model structure $(\W_\iota, \AF_\iota)$.

In order to proceed with our proof, we make the following observations about$(\W_\iota, \AF_\iota)$.

\begin{itemize}
\item The arrows $\iota(g_i)$ are still short arrows, and they lie entirely in the blocks of the grid given by $\iota(A) \times [n]$.
\item By definition, $\iota(g_i) \in \AF_\iota$ for all $i$. 
\item By the model category axioms, every short arrow in $\W_\iota$ has to be either an acyclic cofibration or an acyclic fibration.
\end{itemize}
Since $\W_\iota$ was given by a sequence of right localizations of the trivial model structure, this means that no acyclic cofibrations were added, and therefore all short arrows in $\W_\iota$ are acyclic fibrations. 
This implies that $\W_\iota=\AF_\iota$.

Next, we look at $\W_\iota$ and where it lies in the grid. For this, we use the characterization of the effect of localization on weak equivalences, Theorem \ref{thm:legalw} and Construction \ref{con:rf}. 

Recall that Construction \ref{con:rf} gave us a weak equivalence set $R_f\W$ by first adding the arrow $f$ to a weak equivalence set $\W$, and then repeatedly closing under transfer system operations and under the two-out-of-three property of $\AF_{(n)} \circ \AC$. As in our special case we have a right localization of the trivial model structure, we have no nontrivial $\AC$ in our original model structure, and the process terminates at the second step each time. Therefore, 
\[
\W_\iota=\AF_\iota=\left< \iota(T_A)\right>.
\]
This means that $\W_\iota=\AF_\iota$ is simply including $T_A$ into the larger grid using $\iota$ and then closing under transfer system operations, which in fact only adds vertical pullbacks. In particular, $\W_\iota=\AF_\iota$ lies entirely in the blocks of the grid given by $\iota(A) \times [n]$ as well as in the vertical columns given by  $\{i\} \times [n], \,\, i \notin \iota(A)$, and $\AF_\iota$ is precisely the transfer system one ends up with after applying Step (1) of Construction \ref{con:saturated} to $T_A$.

Hence the arrows from steps $(3)$, $(4)$ and $(5)$ of Construction \ref{con:saturated}  are \emph{not} in $\W_\iota$.
Therefore, we can add those to 
$\AF_\iota$ via right localization.
The order in which we do this matters, as we cannot right localize at anything that is already a weak equivalence, so we first localize at the set from $(5)$, then $(4)$, then $(3)$.

We call the resulting model structure $(\W', \AF')$. By construction, we have added precisely those arrows in $\AF \setminus \AF_\iota$ to $\AF_\iota$ in those last right localizations, meaning that we now have $\AF'=\AF$. Again, as we have not added any acyclic cofibrations, we also have $\W'=\AF'$.

Thus, we have created a model structure $(\W', \AF)$ which arises as a sequence of right Bousfield localizations of the trivial model structure on $[m] \times [n+1]$, which has the correct acyclic fibrations and which satisfies $\W' \subseteq \W$. 

In order to finally enlarge $\W'$ to the desired $\W$ while leaving $\AF$ intact, we perform left localizations at all short arrows $s \in \W \setminus W'$. This is equivalent to performing a left localization of $(\W', \AF)$ at the acyclic cofibrations of $(\W,\AF)$.

Therefore, we have written $(\W,\AF)$ as a sequence of left and right localizations starting from the trivial model structure.
\end{proof}

\begin{remark}
In \cite{KEOOSSYYZ}, Ormsby, Osorno and coauthors 
provide an explicit description of saturated transfer systems on $P \times [n]$ for any finite lattice $P$ which indicates that the recursive construction on $[m] \times [n]$ might be generalized.
Therefore, we also expect a more general version of our saturation theorem to hold. 
\end{remark}

We can summarize our findings as follows.
\begin{itemize}
\item The left or right localization of a model structure with unsaturated acyclic fibrations will also have unsaturated acyclic fibrations.
\item If the acyclic fibrations are saturated, they might become unsaturated after certain right localizations.
\item On $[m] \times [n]$, if the acyclic fibrations are saturated, the model structure can be obtained as a sequence of left and right localizations from the trivial model.
\end{itemize}

\bigskip

\begin{center}
\includegraphics[width=8cm]{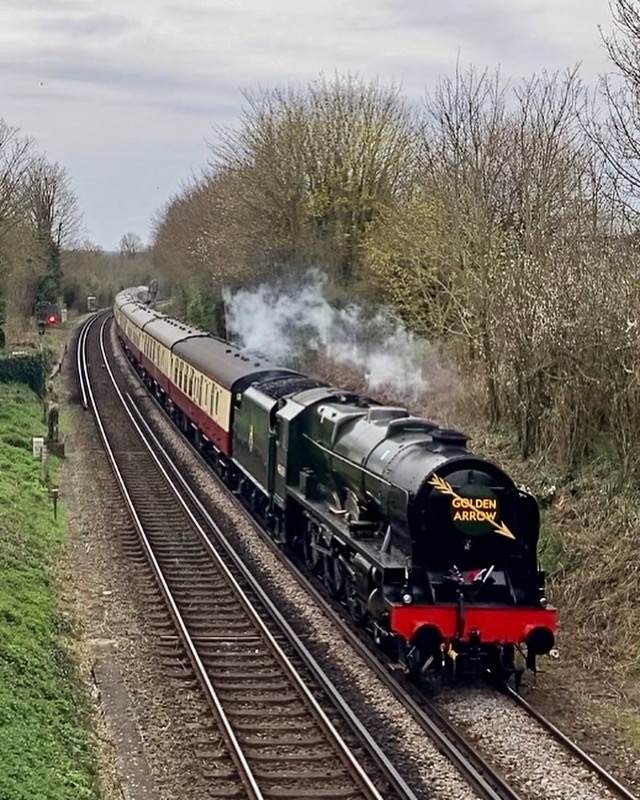}
\end{center}


\newcommand{\etalchar}[1]{$^{#1}$}

\end{document}